\documentclass[12pt,reqno]{amsart}
\usepackage[T1]{fontenc}
\usepackage{amsfonts}
\usepackage{amssymb}
\usepackage{mathrsfs}
\usepackage[latin1]{inputenc}
\usepackage{amsmath}
\usepackage[english]{babel}
\usepackage{setspace}
\usepackage{latexsym,amsfonts,amsmath,amssymb,soul}

\textheight 8.8in \textwidth 6 in \voffset -0.3in \hoffset -0.4in

 \newtheorem{thm}{Theorem}[section]
 \newtheorem{cor}[thm]{Corollary}
 \newtheorem{lem}[thm]{Lemma}
 \newtheorem{prop}[thm]{Proposition}
 \theoremstyle{definition}
 \newtheorem{defn}[thm]{Definition}
 \newtheorem{rem}[thm]{Remark}

 \numberwithin{equation}{section}

 \numberwithin{equation}{section}

\newcommand{\R}{{\mathbb R}}

\newcommand{\C}{{\mathbb C}}
\newcommand{\N}{{\mathbb N}}

\newcommand{\cD}{{\mathcal D}}
\newcommand{\cL}{{\mathcal L}}
\newcommand{\cF}{{\mathcal F}}
\newcommand{\cE}{{\mathcal E}}

\newcommand{\cS}{{\mathcal S}}
\newcommand{\cO}{{\mathcal O}}

\newcommand{\su}{\subseteq}

\newcommand\proj{\mathop{\rm proj\,}}
\newcommand\ind{\mathop{\rm ind\,}}

\begin{document}

%
%
%
%

\title[Multiplication and convolution topological algebras ]
 {Multiplication and convolution topological algebras in spaces of $\omega$-ultradifferentiable functions of Beurling type}

\author[A.A. Albanese, C. Mele]{Angela\,A. Albanese and Claudio Mele}

\address{ Angela A. Albanese\\
Dipartimento di Matematica e Fisica ``E. De Giorgi''\\
Universit\`a del Salento- C.P.193\\
I-73100 Lecce, Italy}
\email{angela.albanese@unisalento.it}

\address{Claudio Mele\\
	Dipartimento di Matematica e Fisica ``E. De Giorgi''\\
	Universit\`a del Salento- C.P.193\\
	I-73100 Lecce, Italy}
\email{claudio.mele1@unisalento.it}

\thanks{\textit{Mathematics Subject Classification 2020:}
Primary 46E10, 46F05, 46H99; Secondary  47A07, 47B38.}
\keywords{Multipliers, convolutors, topological algebras, weight functions, ultradifferentiable rapidly decreasing function spaces of Beurling type. }



\dedicatory{Dedicated to Prof. F. Altomare on the occasion of his 70th birthday }

\begin{abstract}
We determine multiplication and convolution topological algebras for classes of $\omega$-ultradifferentiable functions of Beurling type. Hypocontinuity and discontinuity of the multiplication and convolution  mappings are also investigated.
\end{abstract}

\maketitle
\section{Introduction }\label{intro}

Schwartz started in 1966 the study of multipliers and convolutors of the space $\cS(\R)$ of rapidly decreasing functions. The interest lies in the importance of their application to the study of partial differential equations. Since then many authors introduced and  studied particular aspects of the spaces of multipliers and of convolutors for ultradifferentiable classes of rapidly decreasing functions of Beurling or Roumieu type in the sense of Komatsu \cite{K} (see  \cite{De,De1,De2,De3,DD} for recent results in this setting). In the last years  the attention  has focused on the study of the space $\cS_\omega(\R^N)$ of the ultradifferentiable rapidly decreasing functions of Beurling type, as introduced by Bj\"orck \cite{B} (see
\cite{BJO,BJOR,BJOS,De1}, for instance, and the references therein).  Inspired by this line of research and by the previous work, in \cite{AC,AC2} the authors introduced and studied the space $\cO_{M,\omega}(\R^N)$ of the slowly increasing functions of Beurling type in the setting of ultradifferentiable function spaces of Beurling type, showing that it is the space of the multipliers of the space $\cS_\omega(\R^N)$ and of its dual $\cS'_\omega(\R^N)$, and the space $\cO_{C,\omega}(\R^N)$ of the very slowly increasing functions of Beurling type, whose strong dual $\cO'_{C,\omega}(\R^N)$ is the space of the convolutors of the space $\cS_\omega(\R^N)$ and of its dual $\cS'_\omega(\R^N)$. Their rich topological structure led us to determine deeply results concerning regularity, equivalent systems of seminorms, representations of the ultradistributions and the action of the Fourier transform, that is a topological isomorphism from $\cO_{M,\omega}(\R^N)$ to $\cO'_{C,\omega}(\R^N)$.

In this paper the authors continue the study of these spaces to complete their description. 
The  aim  is to establish that $(\cO_{M,\omega}(\R^N),\cdot)$, $(\cS_{\omega}(\R^N),\cdot)$ are multiplication topological algebras, while $(\cS_\omega(\R^N),\star)$ and $(\cO'_{C,\omega}(\R^N),\star)$ are convolution topological algebras. We also determine the spaces of multipliers and of convolutors of the spaces $\cO_{M,\omega}(\R^N)$, $\cO_{C,\omega}(\R^N)$ and their duals.
 Furthermore, we  analyze  the continuity of the  multiplication and convolution bilinear mappings on some pairs between the spaces $\cO_{M,\omega}(\R^N)$, $\cO_{C,\omega}(\R^N)$, $\cS_\omega(\R^N)$ and their duals, studied classically by Schwartz \cite{S}  (see also Larcher \cite{La} and the references therein). This approach is done not only treating the hypocontinuity,
 but also describing and investigating  the continuity properties of these mappings.

The paper is organized as follows. Section 2 is devoted to recalling some  definitions and properties of the weights $\omega$ and of the $\omega$-ultradifferentiable functions, that we use in the following. 
 In Section 3 we prove the results about the multiplication and convolution (topological) algebras and determine the spaces of multipliers and of convolutors of the spaces under consideration. Finally, in the last section, we analyze the multiplication and convolution bilinear mappings on some pairs between the spaces $\cO_{M,\omega}(\R^N)$, $\cO_{C,\omega}(\R^N)$, $\cS_{\omega}(\R^N)$ and their duals, proving when are hypocontinuous or discontinuous.

\section{Definitions and preliminary results}

We first give the definition of non-quasianalytic weight function in the sense of Braun, Meise and Taylor \cite{BMT} suitable
for the Beurling case, i.e., we also consider the logarithm as a weight function.

\begin{defn}\label{D.weight} A non-quasianalytic weight function is a continuous increasing function $\omega:[0,\infty)\to[0,\infty)$  satisfying the following properties:
	\begin{itemize}
		\item[($\alpha$)] there exists $K\geq1$ such that $\omega(2t)\leq K(1+\omega(t))$ for every $t\geq0$;
		\item[($\beta$)] $\int_{1}^{\infty} \frac{\omega(t)}{1+t^2}\, dt < \infty$;
		\item[($\gamma$)] there exist $a \in \mathbb{R}$, $b>0$ such that $\omega(t) \geq a+b\log(1+t)$, for every $t\geq0$;
		\item[($\delta$)] $\varphi_\omega(t)= \omega \circ \exp(t)$ is a convex function.
	\end{itemize}
\end{defn}

	We recall some known properties of the weight functions that shall be useful in the following (the proofs can be found in the literature):

(1) Condition $(\alpha)$ implies that
\begin{equation}\label{sub}
\omega(t_1+t_2)\leq K(1+\omega(t_1)+\omega(t_2)), \; \forall t_1, t_2 \geq 0.
\end{equation}
Observe that this condition is weaker than subadditivity (i.e.,  $\omega(t_1+t_2)\leq \omega(t_1) + \omega(t_2))$. The weight functions satisfying ($\alpha$) are not necessarily subadditive in general.

(2) Condition $(\alpha)$ implies that there exists $L\geq 1$ such that
\begin{equation}\label{l}
\omega(e t)\leq L(1+\omega(t)), \; \forall t \geq 0.
\end{equation}

(3) By condition $(\gamma)$ we have  that
\begin{equation}\label{eq.Lpspazi}
e^{-\lambda \omega(t)}\in L^p(\R^N), \; \forall  \lambda\geq \frac{N+1}{bp}.
\end{equation}

Given a non-quasianalytic weight function $\omega$, we define the Young conjugate $\varphi^*_\omega$ of $\varphi_\omega$ as the function $\varphi^*_\omega:[0,\infty)\to [0,\infty)$ by
\begin{equation*}\label{Yconj}
\varphi^*_\omega(s):=\sup_{t\geq 0}\{ st-\varphi_\omega(t)\},\quad s\geq 0.
\end{equation*}
There is no loss of generality to assume that $\omega$ vanishes on $[0,1]$. Therefore, $\varphi^*_\omega$ is convex and increasing, $\varphi^*_\omega(0)=0$ and $(\varphi^*_\omega)^*=\varphi_\omega$. 

Further useful properties of $\varphi^*_\omega$  are listed below (see \cite{BMT}):
\begin{enumerate}
	\item[\rm (1)] $\frac{\varphi^*_\omega(t)}{t}$ is an increasing function in $(0,\infty)$.
	\item[\rm (2)] For every $s,t\geq 0$ and $\lambda>0$
	\begin{equation}\label{secondprop}
	2\lambda\varphi^*_\omega\left(\frac{s+t}{2\lambda}\right)\leq \lambda \varphi^*_\omega\left(\frac {s}{\lambda}\right)+\lambda\varphi^*_\omega\left(\frac{t}{\lambda}\right) \leq \lambda\varphi^*_\omega\left(\frac{s+t}{\lambda}\right).
	\end{equation}
	\item[\rm (3)] For every $t\geq 0$ and $\lambda>0$
	\begin{equation*}\label{eq.bb}
	\lambda L \varphi^*_\omega\left(\frac{t}{\lambda L}\right)+t\leq \lambda \varphi^*_\omega\left(\frac{t}{\lambda}\right)+\lambda L,
	\end{equation*}
	where $L\geq 1$ is the constant appearing in formula $(\ref{l})$.
	\item[\rm (4)] For all $m,M\in\mathbb{N}$ with $M\geq mL$, where $L$ is the constant appearing in formula $(\ref{l})$, and for every $t\geq 0$
	\begin{equation}\label{firstprop}
	2^{t}\exp\left(M\varphi^*_\omega\left(\frac{t}{M}\right)\right)\leq C \exp\left(m\varphi^*_\omega\left(\frac{t}{m}\right)\right),
	\end{equation}
	with $C:=e^{mL}$.
\end{enumerate}

We now introduce the ultradifferentiable function space $\cS_\omega(\R^N)$ in the sense of Bj\"ork \cite{B}.

\begin{defn}\label{D.Beurling} Let $\omega$ be a non-quasianalytic weight function. We denote by  $\mathcal{S}_\omega(\mathbb{R}^N)$  the set of all functions $f\in L^1(\mathbb{R}^N)$ such that $f,\hat{f}\in C^\infty(\mathbb{R}^N)$ and for all $\lambda >0$ and $\alpha\in\N_0^N$ we have
	\begin{equation*}\label{cond Sw 1}
	\| \exp(\lambda\omega)\partial^\alpha f\|_\infty <\infty\ \ {\rm and }\ \  
	\|\exp(\lambda\omega)\partial^\alpha\hat{f}\|_\infty <\infty \; ,
	\end{equation*}
	where $\hat{f}$ denotes the Fourier transform of $f$. The elements of  $\mathcal{S}_\omega(\mathbb{R}^N)$ are called \textit{$\omega$-ultradifferentiable rapidly decreasing functions of Beurling type}. 
	We denote by  $\cS'_{\omega}(\R^N)$ the dual of $\cS_{\omega}(\R^N)$  endowed with its strong topology.
\end{defn}

The space $\cS_\omega(\R^N)$ is a Fr\'echet space with different equivalent systems of seminorms (see \cite[Theorem 4.8]{BJOR} and \cite[Theorem 2.6]{BJO}). In the following, we will use the following  system of norms generating the Fr\'echet topology
of $\cS_\omega(\R^N)$: 
	\[
q_{\lambda,\mu}(f):=\underset{\alpha \in \mathbb{N}^N_0}{\sup}\| \exp(\mu\omega) \partial^\alpha f\|_\infty\exp\left(-\lambda\varphi^*_\omega\left(\frac{|\alpha|}{\lambda}\right)\right), \quad \lambda,\mu>0,\ f\in \cS_\omega(\R^N),\]
or equivalently, the sequence of norms $\{q_{m,n}\}_{m,n\in\N}$.

We point out that the space $\cS_\omega(\R^N)$ is a nuclear Fr\'echet space, see, f.i., \cite[Theorem 3.3]{BJOS} or \cite[Theorem 1.1]{De1}.

We refer to  \cite{BMT} for the definition and  the main properties of the ultradifferentiable function spaces $\cE_\omega(\Omega)$, $\cD_\omega(\Omega)$ and their duals of Beurling type in the sense of Braun, Meise and Taylor. We only recall that 
for an open subset $\Omega$ of $\R^N$, the space $	\cE_\omega(\Omega)$ is defined as
	\[
	\cE_\omega(\Omega):=\left\{f\in C^\infty(\Omega)\colon p_{K,m}(f)<\infty\  \forall K \Subset \Omega,\, m\in\N  \right\},
	\]
	where
	\[
	 p_{K,m}(f):=\sup_{x\in K}\sup_{\alpha\in\N_0^N}|\partial^{\alpha}f(x)|\exp\left(-m \varphi^*_\omega\left(\frac{|\alpha|}{m}\right)\right).
	\]
 $	\cE_\omega(\Omega)$ is a nuclear Fr\'echet space with respect to  the lc-topology generated by the system of seminorms $\{p_{K,m}\}_{K\Subset \Omega, m\in\N}$ (see \cite[Proposition 4.9]{BMT}).
	The elements of $\cE_\omega(\Omega)$ are called \textit{$\omega$-ultradifferentiable functions
		of Beurling  type} on $\Omega$.



The spaces  $\cO_{M,\omega}(\R^N)$ and $\cO_{C,\omega}(\R^N)$ have been introduced in \cite{AC} and the definition has been given there in terms of weighted $L^\infty$-norms as it follows.

\begin{defn}\label{D.spaziO}
	Let $\omega$ be a non-quasianalytic weight function. 
	
	(a)	The space  $\mathcal{O}_{M,\omega}(\mathbb{R}^N)$ of \textit{slowly increasing functions of Beurling type} on $\R^N$ is defined  by
\begin{equation*}\label{eq.OM}
	\cO_{M,\omega}(\R^N):=\bigcap_{m=1}^{\infty}\bigcup_{n=1}^\infty \mathcal{O}^m_{n,\omega}(\mathbb{R}^N),
	\end{equation*}
	where  
\begin{align*}
	&\mathcal{O}^m_{n,\omega}(\mathbb{R}^N):=\\
	&\left\{f\in C^\infty(\R^N)\colon 	r_{m,n}(f):= \underset{\alpha\in \mathbb{N}^N_0}{\sup}\underset{x\in\mathbb{R}^N}{\sup}\, |\partial^\alpha f(x)|\exp\left(-n\omega(x)-m\varphi^*_\omega\left(\frac{|\alpha|}{m}\right)\right)<\infty\right\}
	\end{align*}
	 endowed with the norm $r_{m,n}$ is a Banach space for any $m,n\in\N$.
	The space $\mathcal{O}_{M,\omega}(\mathbb{R}^N)$ is endowed with its natural lc-topology $t$, i.e., $\cO_{M,\omega}(\R^N)=\proj_{\stackrel{\leftarrow}{m}}\,\ind_{\stackrel{\rightarrow}{n}}\, \mathcal{O}^m_{n,\omega}(\mathbb{R}^N)$ is the projective limit of the (LB)-spaces $\cO^m_\omega(\R^N):=\ind_{\stackrel{\rightarrow}{n}}\, \mathcal{O}^m_{n,\omega}(\mathbb{R}^N)$.
	
	(c) The space  $\cO_{C,\omega}(\mathbb{R}^N)$ of \textit{very slowly increasing functions of Beurling type} on $\R^N$ is defined  by
	\begin{equation*}\label{eq.OC}
	\cO_{C,\omega}(\R^N):=\bigcup_{n=1}^{\infty}\bigcap_{m=1}^\infty \mathcal{O}^m_{n,\omega}(\mathbb{R}^N).
	\end{equation*}
	 The space $\cO_{C,\omega}(\R^N)$  endowed with its natural lc-topology is an (LF)-space, i.e., $\cO_{C,\omega}(\R^N)=\ind_{\stackrel{\rightarrow}{n}}\cO_{n,\omega}(\R^N)$, where $\cO_{n,\omega}(\R^N):=\proj_{\stackrel{\leftarrow}{m}}\, \mathcal{O}^m_{n,\omega}(\mathbb{R}^N)$.
	 
	 Let $\cO'_{M,\omega}(\R^N)$ ($\cO'_{C,\omega}(\R^N)$, resp.) denote the strong dual of $\cO_{M,\omega}(\R^N)$ (of $\cO_{C,\omega}(\R^N)$, resp.) endowed with its strong topology.
\end{defn}

The space $\mathcal{O}_{M,\omega}(\mathbb{R}^N)$ is the space of multipliers of $\cS_\omega(\R^N)$ and of its dual space $\cS'_\omega(\R^N)$ as proved in \cite{AC,De}, i.e., $f\in \cO_{M,\omega}(\R^N)$ if, and only if, $fg\in \cS_\omega(\R^N)$ for all $g\in \cS_\omega(\R^N)$ if, and only if, $fT\in \cS'_{\omega}(\R^N)$ for all $T\in \cS'_\omega(\R^N)$. Moreover, if $f\in\cO_{M,\omega}(\R^N)$, then the linear operators $M_f: \mathcal{S}_{\omega}(\mathbb{R}^N)\to\mathcal{S}_{\omega}(\mathbb{R}^N)$ defined by $M_f(g):=fg$, for  $g\in\mathcal{S}_{\omega}(\mathbb{R}^N)$, and $M_f\colon \mathcal{S}'_\omega(\mathbb{R}^N)\to  \mathcal{S}'_\omega(\mathbb{R}^N)$ defined by $M_f(T):=fT$, for $T\in \mathcal{S}'_\omega(\mathbb{R}^N)$, are continuous. Furthemore,  $\mathcal{O}_{M,\omega}(\mathbb{R}^N)$ is an ultrabornological space by \cite[Theorem 5.3]{De}. Due to  \cite[Theorem 5.3]{De} and the results in \cite{AC}, we have that $\cO_{M,\omega}(\R^N)$ is a closed subspace of $\cL_b(\cS_\omega(\R^N))$  ($\cL_b(\cS_\omega(\R^N))$ denotes the space of all continuous linear operators from $\cS_\omega(\R^N)$ into itself endowed with the topology of the uniform convergence on bounded subsets of $\cS_\omega(\R^N)$). In  particular, a fundamental system of continuous norms on $\cO_{M,\omega}(\R^N)$ is given by 
\begin{equation}\label{eq.nuovenorme}
q_{m,g}(f):=\sup_{\alpha\in \N_0^N}\sup_{x\in \R^N}|g(x)||\partial^\alpha f(x)|\exp\left(-m\varphi^*_\omega\left(\frac{|\alpha|}{m}\right)\right),\quad g\in\cS_\omega(\R^N),\ m\in\N,
\end{equation}
see \cite[Proposition 5.6 and Theorem 5.9]{AC}.

For any $\omega$ non-quasianalytic weight function satisfying the condition $\log(1+t)=o(\omega(t))$ as $t\to\infty$, the space $\mathcal{O}'_{C,\omega}(\mathbb{R}^N)$ is the space of convolutors of $\cS_\omega(\R^N)$ and of its dual space $\cS'_\omega(\R^N)$ as proved in \cite{AC2}, i.e., $T\in \cO'_{C,\omega}(\R^N)$ if, and only if, $T\star f\in \cS_\omega(\R^N)$ for all $f\in \cS_\omega(\R^N)$ if, and only if, $T\star S\in \cS'_{\omega}(\R^N)$ for all $S\in \cS'_\omega(\R^N)$. Moreover, if $T\in\cO'_{C,\omega}(\R^N)$, then the linear operators $C_T: \mathcal{S}_{\omega}(\mathbb{R}^N)\to\mathcal{S}_{\omega}(\mathbb{R}^N)$ defined by $C_T(f):=T\star f$, for  $f\in\mathcal{S}_{\omega}(\mathbb{R}^N)$, and $C_T\colon \mathcal{S}'_\omega(\mathbb{R}^N)\to  \mathcal{S}'_\omega(\mathbb{R}^N)$ defined by $C_T(S):=T\star S$, for $S\in \mathcal{S}'_\omega(\mathbb{R}^N)$, are continuous.

We also recall that   
the  inclusions $\mathcal{D}_{\omega}(\mathbb{R}^N) \hookrightarrow \mathcal{S}_{\omega}(\mathbb{R}^N) \hookrightarrow \mathcal{O}_ {C,\omega}(\mathbb{R}^N)\hookrightarrow \mathcal{O}_ {M,\omega}(\mathbb{R}^N)\hookrightarrow \mathcal{E}_ {\omega}(\mathbb{R}^N)$ are well-defined,   continuous with dense range, see  \cite[Theorems 3.8, 3.9 and 5.2(1)]{AC}.
Hence, the inclusions $\mathcal{E}'_{\omega}(\mathbb{R}^N) \hookrightarrow \mathcal{O}'_ {M,\omega}(\mathbb{R}^N)\hookrightarrow \mathcal{O}'_ {C,\omega}(\mathbb{R}^N) \hookrightarrow  \mathcal{S}'_{\omega}(\mathbb{R}^N)\hookrightarrow \mathcal{D}'_ {\omega}(\mathbb{R}^N)  $
are also well-defined and continuous. On the other hand, $\cO_{M,\omega}(\R^N)\hookrightarrow \cS'_\omega(\R^N)$ and
$\cO_{C,\omega}(\R^N)\hookrightarrow \cS'_\omega(\R^N)$ continuously, as it is easy to see.

In \cite{AC2} the following spaces have been introduced.

	\begin{defn}\label{D.SapziOOP} Let $\omega$ be a non-quasianalytic weight function and $1\leq p<\infty$.

	(a) The space $\cO_{M,\omega,p}(\R^N)$ is defined by $	\cO_{M,\omega,p}(\R^N):=\bigcap_{m=1}^{\infty}\bigcup_{n=1}^\infty \mathcal{O}^m_{n,\omega,p}(\mathbb{R}^N)$,
	where
	\small \begin{align*}
	&\cO_{n,\omega,p}^m(\R^N):=	\\
	&\left\{f\in C^\infty(\R^N )\colon r^p_{m,n,p}(f):= \sum_{\alpha\in\mathbb{N}^N_0} \|\exp(-n\omega)\partial^\alpha f\|_p^p\exp\left(-mp\varphi^*_\omega\left(\frac{|\alpha|}{m}\right)\right)<\infty\right\}
	\end{align*}
endowed with the norm $r_{m,n,p}$ is  Banach space for any $m,n\in\N$. The space
	is endowed with its natural lc-topology, i.e., $\cO_{M,\omega,p}(\R^N) =\proj_{\stackrel{\leftarrow}{m}}\,\ind_{\stackrel{\rightarrow}{n}}\,\mathcal{O}^m_{n,\omega,p}(\mathbb{R}^N)$ is the projective limit of the  (LB)-spaces $\cO^m_{\omega,p}(\R^N):=\ind_{\stackrel{\rightarrow}{n}}\,\mathcal{O}^m_{n,\omega,p}(\mathbb{R}^N)$.

	(b) The space  $\cO_{C,\omega,p}(\R^N)$ is defined  by $	\cO_{C,\omega,p}(\R^N):=\bigcup_{n=1}^{\infty}\bigcap_{m=1}^\infty \mathcal{O}^m_{n,\omega,p}(\mathbb{R}^N)$. $\cO_{C,\omega,p}(\R^N)$ endowed with its natural lc-topology  is an (LF)-space, i.e.,  $\cO_{C,\omega,p}(\R^N)=\ind_{\stackrel{\rightarrow}{n}}\cO_{n,\omega,p}(\R^N)$, where $\cO_{n,\omega,p}(\R^N):=\proj_{\stackrel{\leftarrow}{m}}\cO_{n,\omega,p}^m(\R^N)$.
\end{defn}

As shown in  \cite[Proposition 3.8]{AC2}  it holds true  that  $\cO^m_\omega(\R^N)=\cO^m_{\omega,p}(\R^N)$ and $\cO_{n,\omega}(\R^N)=\cO_{n,\omega,p}(\R^N)$ algebraically and topologically for any $1\leq p<\infty$, thereby obtaining that 
 $\cO_{M,\omega}(\R^N)=\cO_{M,\omega,p}(\R^N)$ and $\cO_{C,\omega}(\R^N)=\cO_{C,\omega,p}(\R^N)$ algebraically and topologically for any $1\leq p< \infty$. Since the Banach spaces $\cO_{n,\omega,p}^m(\R^N)$ are reflexive whenever $1<p<\infty$, by a result of Vogt \cite[Proposition 4.4]{Vo} we can conclude that  $\cO^m_\omega(\R^N)=\cO^m_{\omega,p}(\R^N)$  is a reflexive and complete (LB)-space, hence a \textit{regular (LB)-space}, 
 being it an inductive limit of reflexive Banach spaces. Recall that an (LF)-space $E=\ind_{\stackrel{\rightarrow}{n}}
 E_n$ is said to be \textit{regular} if every bounded subset in $E$ is
 contained and bounded in $E_n$ for some $n \in\N$ and that complete (LF)-spaces are always regular.
 An (LF)-space $E=\ind_{\stackrel{n}{\to}}E_n$ is said to be  \textit{sequentially retractive} if for every null sequence in $E$ there exists
 $n\in\N$ such that the sequence is contained and converges to zero in $E_n$. We observe that sequential retractivity implies completeness, see	\cite[Corollary 2.8]{We}.
 
 We end this section by showing that the (LB)-spaces $\cO^m_\omega(\R^N)$, with $m\in\N$, 
  are sequentially retractive and Montel. For this, we need the following lemma.
 
\begin{lem}\label{Lbou} Let $\omega$ be a non-quasianalytic weight function and let $m\in\N$. Then the spaces $\cO_{n+1,\omega}^m(\R^N)$ and $\cE_\omega(\R^N)$ induce the same topology on the closed unit ball $B_{n}^m$ of  $\cO_{n,\omega}^m(\R^N)$ for all $n\in\N$.
\end{lem}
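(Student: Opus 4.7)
The plan is to observe first that the inclusion $\cO_{n+1,\omega}^m(\R^N) \hookrightarrow \cE_\omega(\R^N)$ is continuous: for any compact $K\Subset \R^N$, continuity of $\omega$ yields $c_K := \sup_{x\in K} \exp((n+1)\omega(x)) < \infty$, whence $p_{K,m}(f)\leq c_K\, r_{m,n+1}(f)$ for all $f\in \cO_{n+1,\omega}^m(\R^N)$. This already shows that the $\cE_\omega$-trace topology on $B_n^m$ is no finer than the $\cO_{n+1,\omega}^m$-trace topology; the whole content of the lemma is therefore the reverse inclusion of topologies. Equivalently, given $\varepsilon>0$, I look for a compact $K$ and a $\delta>0$ such that $\{f\in B_n^m : p_{K,m}(f) < \delta\} \subseteq \{f \in B_n^m : r_{m,n+1}(f)\leq \varepsilon\}$.

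The argument I would use is the classical inner/outer splitting. By property $(\gamma)$, $\omega(t)\geq a+b\log(1+t)$, so $\omega(|x|)\to\infty$ as $|x|\to\infty$; thus I pick $R>0$ so large that $\exp(-\omega(|x|))<\varepsilon$ whenever $|x|\geq R$, set $K:=\{x\in\R^N : |x|\leq R\}$, and take $\delta:=\varepsilon$. For any $f\in B_n^m$ with $p_{K,m}(f)<\varepsilon$ I split the supremum defining $r_{m,n+1}(f)$ according to whether $x\in K$ or not. On $K$, $\exp(-(n+1)\omega(x))\leq 1$, so the relevant expression is bounded directly by $p_{K,m}(f)<\varepsilon$. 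On $\R^N\setminus K$, I factor $\exp(-(n+1)\omega(x))=\exp(-n\omega(x))\exp(-\omega(x))$ to obtain
\[
|\partial^\alpha f(x)|\exp\!\left(-(n+1)\omega(x)-m\varphi^*_\omega\!\left(\frac{|\alpha|}{m}\right)\right) \leq r_{m,n}(f)\cdot \exp(-\omega(x))\leq 1\cdot \varepsilon,
\]
since $f\in B_n^m$ forces $r_{m,n}(f)\leq 1$ and the choice of $R$ handles the second factor. Passing to the supremum in $x$ and $\alpha$ gives $r_{m,n+1}(f)\leq \varepsilon$, which is exactly the inclusion I was after.

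I do not foresee a serious obstacle: the proof rests entirely on the uniform bound $r_{m,n}(f)\leq 1$ available on the ball $B_n^m$ together with the fact that the shift from index $n$ to $n+1$ produces the extra decaying factor $\exp(-\omega(x))$, which via property $(\gamma)$ is uniformly small outside a sufficiently large Euclidean ball. The only small point to keep track of is the standard convention $\omega(x)=\omega(|x|)$ for $x\in\R^N$, implicit throughout the paper.
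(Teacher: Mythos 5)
Your proof is correct and follows essentially the same route as the paper's: choose $R$ (the paper's $M$) so that $\exp(-\omega(x))<\varepsilon$ for $|x|\geq R$, then split the supremum defining $r_{m,n+1}$ into $|x|\leq R$, controlled by $p_{K,m}(f)<\varepsilon$, and $|x|\geq R$, controlled by the uniform bound $r_{m,n}(f)\leq 1$ on $B_n^m$ together with the extra factor $\exp(-\omega(x))<\varepsilon$. The only difference is that you also spell out the easy direction (continuity of the inclusion into $\cE_\omega(\R^N)$), which the paper leaves implicit.
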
	

\begin{proof} Fix $n\in\N$ and $\varepsilon>0$. The set $U:=\{f\in \cO_{n+1,\omega}^m(\R^N)\colon r_{m,n+1}(f)<\varepsilon\}$ is a $0$-neighborhood of $\cO_{n+1,\omega}^m(\R^N)$. Now, let $M>0$ such that $\exp(-\omega(x))<\varepsilon$ for every $|x|\geq M$ and $V:=\{f\in \cE_\omega(\R^N)\colon p_{K,m}(f)<\varepsilon \}$, where $K:=\{x\in\R^N\colon |x|\leq M\}$. Then $V\cap B_{n}^m\su U\cap B_{n}^m$. Indeed,   if $f\in V\cap B_{n}^m$, then 
	\[
	 |\partial^\alpha f(x)|\leq \exp\left(n\omega(x)+m\varphi^*_\omega\left(\frac{|\alpha|}{m}\right)\right)< \varepsilon \exp\left((n+1)\omega(x)+m\varphi^*_\omega\left(\frac{|\alpha|}{m}\right)\right)
	\]
	for every $\alpha\in\N_0^N$ and $|x|\geq M$. Moreover,  $f\in V\cap B_{n}^m$ also  implies that 
	\[
	|\partial^\alpha f(x)|<\varepsilon \exp\left(m\varphi^*_\omega\left(\frac{|\alpha|}{m}\right)\right)\leq \varepsilon\exp\left((n+1)\omega(x)+m\varphi^*_\omega\left(\frac{|\alpha|}{m}\right)\right)
	\]
	for every $\alpha\in\N_0^N$ and $|x|\leq M$. It follows that  $r_{m,n+1}(f)<\varepsilon$ and so, $f\in U\cap B_n^m$. Since $\varepsilon>0$ is arbitrary, we get the thesis.
%
	\end{proof}

We can  to  show that the (LB)-spaces $\cO^m_\omega(\R^N)$, with $m\in\N$,
 are sequentially retractive.

	
\begin{thm}\label{T.complete} Let $\omega$ be a non-quasianalytic weight function. For every  $m\in\N$ the  (LB)-space $\cO^m_\omega(\R^N)$ is  sequentially retractive.
%
	\end{thm}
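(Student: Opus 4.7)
The plan is to combine the regularity of $\cO^m_\omega(\R^N)$, noted in the preceding paragraphs, with Lemma \ref{Lbou}. Let $(f_k)_k\su\cO^m_\omega(\R^N)$ be a null sequence. Since $\cO^m_\omega(\R^N)$ was just identified as a regular (LB)-space (via the reflexive $L^p$-variant $\cO^m_{\omega,p}(\R^N)$ and Vogt's proposition), the bounded sequence lives in a single Banach step: there exist $n\in\N$ and $C>0$ with $r_{m,n}(f_k)\leq C$ for every $k$, so that $(f_k/C)_k\su B_n^m$.

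Next I would show that each seminorm $p_{K,m}$, $K\Subset\R^N$, is continuous on $\cO^m_\omega(\R^N)$. Indeed, for any $f\in\cO^m_{n,\omega}(\R^N)$,
\[
p_{K,m}(f)\leq r_{m,n}(f)\sup_{x\in K}\exp(n\omega(x)),
\]
so $p_{K,m}$ is continuous on every Banach step and hence, by the universal property of the inductive limit, on $\cO^m_\omega(\R^N)$. Therefore $f_k\to 0$ in $\cO^m_\omega(\R^N)$ forces $p_{K,m}(f_k)\to 0$ for every $K\Subset\R^N$, and the same holds after dividing by $C$.

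Finally I would upgrade this weak convergence to convergence in $\cO^m_{n+1,\omega}(\R^N)$ by reusing the estimate at the heart of the proof of Lemma \ref{Lbou}: given $\ve>0$, choose $M>0$ with $\exp(-\omega(x))<\ve$ for every $|x|\geq M$ and set $K=\{x\in\R^N:|x|\leq M\}$; that estimate yields the inclusion $\{f\in B_n^m:p_{K,m}(f)<\ve\}\su\{f\in B_n^m:r_{m,n+1}(f)<\ve\}$. Applied to $f=f_k/C$, this gives $r_{m,n+1}(f_k/C)<\ve$ for all sufficiently large $k$, so $(f_k)_k$ converges to zero in $\cO^m_{n+1,\omega}(\R^N)$, which is precisely sequential retractivity. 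The only delicate point is the initial appeal to regularity: it is what confines the null sequence to a fixed Banach step so that Lemma \ref{Lbou} can be brought to bear; without it the remainder of the argument collapses.
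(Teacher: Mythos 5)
Your proof is correct and follows essentially the same route as the paper's: regularity of the (LB)-space confines the null sequence to a single Banach step, convergence of the sequence in $\cE_\omega(\R^N)$ is observed, and the estimate of Lemma \ref{Lbou} (which you unpack rather than cite) upgrades this to convergence in $\cO^m_{n+1,\omega}(\R^N)$. The only difference is cosmetic: you normalize by $C$ so as to land exactly in the closed unit ball $B_n^m$, a detail the paper glosses over when applying the lemma to a general bounded set.
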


\begin{proof}  Let $\{f_j\}_{j\in\N}$ be a null sequence  of $\cO^m_\omega(\R^N)$. Then $B:=\{f_j\colon j\in\N\}$ is a bounded subset of $\cO^m_\omega(\R^N)$. Since $\cO^m_\omega(\R^N)$ is a regular (LB)-space, $B$ is contained and bounded in $\cO^m_{n,\omega}(\R^N)$ for some $n\in\N$. On the other hand,  $\{f_j\}_{j\in\N}$ converges to $0$  in $\cE_\omega(\R^N)$, as $\cO^m_\omega(\R^N)$ is continuously included in  $\cE_\omega(\R^N)$. Since $\cO^m_{n+1,\omega}(\R^N)$ and $\cE_\omega(\R^N)$ induce the same topology  on $B$ by Lemma \ref{Lbou}, it follows that $\{f_j\}_{j\in\N}$ converges to $0$  in $\cO_{n+1,\omega}^m(\R^N)$.
%
	\end{proof}

\begin{thm}\label{T.Montelspaces} Let $\omega$ be a non-quasianalytic weight function. Then the spaces $\cO_\omega^m(\R^N)$, for $m\in\N$, and $\cO_{M,\omega}(\R^N)$ are complete Montel spaces. 
	\end{thm}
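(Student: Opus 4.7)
The plan is to treat the two assertions in turn, first handling the (LB)-spaces $\cO^m_\omega(\R^N)$ for each fixed $m\in\N$, and then deducing the projective-limit statement for $\cO_{M,\omega}(\R^N)$ from them. Completeness of $\cO^m_\omega(\R^N)$ is essentially already in hand: Theorem \ref{T.complete} establishes sequential retractivity, and by \cite[Corollary 2.8]{We} sequential retractivity of an (LF)-space forces completeness. So the only real content will be the Montel property.

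For the Montel step I would take a bounded set $B\su \cO^m_\omega(\R^N)$. Regularity of $\cO^m_\omega(\R^N)$ (recall it is the inductive limit of reflexive Banach spaces, as noted before Lemma \ref{Lbou}) shows that $B$ sits and is bounded inside some step $\cO^m_{n,\omega}(\R^N)$; after a rescaling I may assume $B\su B_n^m$, the closed unit ball. Lemma \ref{Lbou} then tells us that $\cO^m_{n+1,\omega}(\R^N)$ and $\cE_\omega(\R^N)$ induce the same topology on $B_n^m$. Since $\cE_\omega(\R^N)$ is a nuclear Fr\'echet space and hence Montel, every sequence in $B$ admits a subsequence converging in $\cE_\omega(\R^N)$; by the coincidence of topologies on $B_n^m$, that subsequence also converges in the Banach space $\cO^m_{n+1,\omega}(\R^N)$. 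Thus $B$ is relatively sequentially compact in a Banach space, hence relatively compact there, and therefore relatively compact in the coarser topology of $\cO^m_\omega(\R^N)$ as well. So $\cO^m_\omega(\R^N)$ is Montel.

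Having each $\cO^m_\omega(\R^N)$ complete and Montel, the statement for $\cO_{M,\omega}(\R^N)=\proj_m \cO^m_\omega(\R^N)$ then follows by realising $\cO_{M,\omega}(\R^N)$ as a closed subspace of the countable product $\prod_{m\in\N}\cO^m_\omega(\R^N)$. Countable products of complete locally convex spaces are complete, arbitrary products of Montel spaces are Montel (a bounded subset has bounded projections with compact closures, so Tychonoff yields relative compactness of the set in the product), and closed subspaces inherit both properties; these assemble to give that $\cO_{M,\omega}(\R^N)$ is a complete Montel space.

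The only genuinely delicate point is the Montel step for the (LB)-spaces, where one must promote relative compactness from the very coarse space $\cE_\omega(\R^N)$ to the Banach step $\cO^m_{n+1,\omega}(\R^N)$; this is exactly what Lemma \ref{Lbou} is designed to enable. Everything else is a structural application of standard facts about regular (LB)-spaces, products, and projective limits, so I do not expect further obstacles.
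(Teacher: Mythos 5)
Your treatment of the (LB)-spaces $\cO^m_\omega(\R^N)$ is essentially the paper's argument (regularity of the inductive limit, Lemma \ref{Lbou}, and the Montel property of $\cE_\omega(\R^N)$; your completeness via sequential retractivity and \cite[Corollary 2.8]{We} is a perfectly good alternative to the paper's appeal to Vogt's result). One small point there: to transfer convergence from $\cE_\omega(\R^N)$ to $\cO^m_{n+1,\omega}(\R^N)$ you need the $\cE_\omega$-limit of your subsequence to lie in $B_n^m$, which holds because $B_n^m$ is closed in $\cE_\omega(\R^N)$ (pointwise limits of derivatives preserve the defining estimates); this is implicit in your write-up and worth a sentence.

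The genuine gap is in the last step. A Montel space is by definition a \emph{barrelled} space in which every bounded set is relatively compact, and barrelledness is \emph{not} inherited by closed subspaces: a closed subspace of a product of Montel spaces is complete and semi-Montel, but it can fail to be barrelled and hence fail to be Montel (classical examples of Grothendieck--K\"othe exhibit non-Montel closed subspaces of Montel (DF)-spaces). So your product-and-closed-subspace argument only proves that $\cO_{M,\omega}(\R^N)$ is a complete semi-Montel space. You never address barrelledness at all --- for the (LB)-spaces it is automatic (inductive limits of Banach, hence barrelled, spaces are barrelled), but for the projective limit it is a real issue, and the paper closes it by invoking the external fact that $\cO_{M,\omega}(\R^N)$ is ultrabornological, hence barrelled, by \cite[Theorem 5.3]{De}. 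Add that input (or some other proof of barrelledness of $\cO_{M,\omega}(\R^N)$) and your argument is complete.
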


\begin{proof} The spaces $\cO_\omega^m(\R^N)$, for $m\in\N$, are clearly barrelled as inductive limits of barrelled spaces. On the other hand, $\cO_{M,\omega}(\R^N)$ is ultrabornological by \cite{De}, hence barrelled. Hence, it remains to show that each bounded set in such spaces is relativey compact. This follows from Lemma \ref{Lbou}. Indeed, fixed $m\in\N$ and  a bounded subset $B$ of $\cO_\omega^m(\R^N)$, from the regularity of $\cO_\omega^m(\R^N)$ there exists $n\in\N$ such that $B$ is contained and bounded in $\cO^m_{n,\omega}(\R^N)$. By  Lemma \ref{Lbou} the spaces  $\cO^m_{n+1,\omega}(\R^N)$ and $\cE_\omega(\R^N)$ induce the same topology on $B$ and hence, $B$ is relatively compact in  $\cO^m_{n+1,\omega}(\R^N)$, after having observed that $\cE_\omega(\R^N)$ is Montel. Therefore, $B$ is also  relatively compact in  $\cO^m_{\omega}(\R^N)$. 

Since $\cO_{M,\omega}(\R^N)$ is the projective limit of the Montel spaces  $\cO^m_{\omega}(\R^N)$, each bounded subset of  $\cO_{M,\omega}(\R^N)$ is clearly relatively compact. The completeness of the
	space $\cO_{M,\omega}(\R^N)$  follows from the fact that it is the projective limit of the complete spaces $\cO^m_\omega(\R^N)$.
%
\end{proof}

\begin{rem} We remark that the completeness of the space $\cO_{M,\omega}(\R^N)$ follows also from \cite[Theorem 5.2(ii)]{AC} combined with \cite[Theorem 5.3]{De}.
	\end{rem}

\section{The Algebras $(\cO_{M,\omega}(\R^N),\cdot)$, $(\cS_{\omega}(\R^N),\cdot)$, $(\cS_\omega(\R^N),\star)$ and $(\cO'_{C,\omega}(\R^N),\star)$}
Let us recall that a bilinear mapping $b\colon E \times F\to  G$ between locally convex Hausdorff
spaces $E$, $F$, $G$ is continuous if, and only if, for every continuous seminorm $p_1$
on $G$ there exist continuous seminorms $p_2$ and $p_3$ on $E$ and $F$, respectively, such
that the inequality
\[
p_1(b(v,w)) \leq p_2(v)p_3(w)
\]
holds for every pair $(v,w) \in E\times  F$.
A locally convex algebra (topological algebra, briefly) over the field $\mathbb{K}=\R$ or $\mathbb{K}=\C$ is  a lcHs $E$ together with a bilinear map $b\colon E\times E\to E$ that turns $E$ into an algebra over $\mathbb{K}$ with $b$  continuous.

The aim of this section is to establish  that $(\cO_{M,\omega}(\R^N),\cdot)$, $(\cS_{\omega}(\R^N),\cdot)$ are multiplication topological algebras, while $(\cS_\omega(\R^N),\star)$ and $(\cO'_{C,\omega}(\R^N),\star)$ are convolution topological algebras.

We first consider  the case $\cO_{M,\omega}(\R^N)$ and establish the following fact.

\begin{lem}\label{L.FQ} Let  $\omega$ be a 
	non-quasianalytic weight function. Then for all $k\in \cS_\omega(\R^N)$ there exists $l\in \cS_\omega(\R^N)$  such that $|k(x)|\leq l^2(x)$ for every $x\in\R^N$.
\end{lem}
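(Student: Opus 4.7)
The plan is to construct $l$ by mollifying a suitable continuous majorant of $|k|^{1/2}$. Setting
\[
F(x):=\sup\bigl\{|k(y)|^{1/2}\colon |y-x|\leq 1\bigr\},\quad x\in\R^N,
\]
one obtains a continuous, bounded function with $F(x)\geq |k(x)|^{1/2}$ for all $x$. The rapid decay of $k$ transfers to $F$: starting from $|k(y)|\leq C_\lambda e^{-\lambda\omega(y)}$ (valid for every $\lambda>0$) and combining it with the consequence $\omega(y)\geq \omega(x)/K-1-\omega(1)$ of property $(\alpha)$ in the form \eqref{sub} when $|y-x|\leq 1$, one deduces that for every $\mu>0$ there exists $A_\mu>0$ with $F(x)\leq A_\mu e^{-\mu\omega(x)}$ on $\R^N$.

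Next, I would invoke the non-quasianalyticity of $\omega$ to select $\phi\in\cD_\omega(\R^N)$ with $\phi\geq 0$, $\int_{\R^N}\phi=1$ and $\text{supp}\,\phi\su \ov{B(0,1)}$, and set $l:=F\ast\phi$. Then $l\in C^\infty(\R^N)$ because $\phi$ is smooth. For the pointwise lower bound, observe that whenever $|y|\leq 1$ the point $x$ belongs to $\ov{B(x-y,1)}$, so by the very definition of $F$ one has $F(x-y)\geq |k(x)|^{1/2}$; integrating against $\phi$ then yields
\[
l(x)=\int_{|y|\leq 1} F(x-y)\phi(y)\,dy\geq |k(x)|^{1/2}\int_{\R^N}\phi=|k(x)|^{1/2},
\]
which gives the desired inequality $l^2(x)\geq |k(x)|$ for every $x\in\R^N$.

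Finally, I would verify $l\in\cS_\omega(\R^N)$ via the seminorm system $\{q_{\lambda,\mu}\}$. Differentiating under the integral gives $\partial^\alpha l=F\ast\partial^\alpha\phi$, and hence
\[
|\partial^\alpha l(x)|\leq \|\partial^\alpha\phi\|_1\sup_{|y|\leq 1}F(x-y).
\]
Combining the $\cD_\omega$-type estimate $\|\partial^\alpha\phi\|_1\leq D_m\exp(m\varphi^*_\omega(|\alpha|/m))$ (valid for every $m\in\N$) with the decay bound on $F$ and another application of $(\alpha)$ to convert $\omega(x-y)$ into $\omega(x)$ for $|y|\leq 1$, the right-hand side is dominated by $B_{\mu,m}\,e^{-\mu\omega(x)}\exp(m\varphi^*_\omega(|\alpha|/m))$, for any $\mu>0$ and $m\in\N$. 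This is exactly what is needed to conclude $q_{m,\mu}(l)<\infty$ for all $m,\mu$, hence $l\in\cS_\omega(\R^N)$.

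The main technical point is this last step: each appeal to $(\alpha)$ degrades the decay rate by a factor $K$, so it is essential that the decay bound on $k$ (and consequently on $F$) is available for \emph{every} $\lambda>0$ in order to recover a bound on $l$ that survives the seminorms $q_{\lambda,\mu}$ for every parameter $\mu$; the estimate on $\partial^\alpha\phi$ available in $\cD_\omega$ then absorbs the $\alpha$-dependence uniformly via $\varphi^*_\omega$.
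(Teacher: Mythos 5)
Your proof is correct, but it follows a genuinely different (and more self-contained) route than the paper. The paper's argument is two lines: it sets $h:=\sqrt{|k|}$, observes that $e^{\lambda\omega(x)}h(x)\to 0$ as $|x|\to\infty$ for every $\lambda>0$, and then invokes \cite[Lemma 5.8]{AC}, which asserts that any non-negative function with this decay is dominated pointwise by an element of $\cS_\omega(\R^N)$. What you have done, in effect, is reprove that cited majorization lemma from scratch for the particular function $\sqrt{|k|}$: the sup-over-unit-balls regularization $F$ followed by mollification with a non-negative normalized $\phi\in\cD_\omega(\R^N)$ is exactly the standard mechanism behind such results, and your bookkeeping is sound --- the transfer of decay from $k$ to $F$ and from $F$ to $l$ via \eqref{sub} costs only a factor $K$ in the exponent each time, which is harmless precisely because the decay of $k$ holds for \emph{every} $\lambda$, and the $\cD_\omega$-estimates on $\partial^\alpha\phi$ control the derivative growth through $\varphi^*_\omega$ as needed for the seminorms $q_{m,\mu}$. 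Two small points worth making explicit if you write this up: the existence of a non-negative $\phi\in\cD_\omega(\R^N)$ with $\int\phi=1$ and support in the unit ball (take $\psi\bar\psi$ for a suitable $\psi\in\cD_\omega$, rescale and normalize), and the fact that finiteness of $q_{m,\mu}(l)$ for all $m,\mu$ characterizes membership in $\cS_\omega(\R^N)$ (the equivalence of seminorm systems from \cite[Theorem 4.8]{BJOR}), since the definition of $\cS_\omega(\R^N)$ also involves conditions on $\hat{l}$. The trade-off is clear: the paper's proof is shorter but leans on an external lemma, while yours is longer but elementary and makes the construction of the dominating function explicit.
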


\begin{proof} Let $k\in \cS_\omega(\R^N)$ be fixed. The function $h(x):=\sqrt{|k(x)|}$ for $x\in\R^N$ is a  non-negative function such that 
	\[
	\lim_{|x|\to\infty}\exp(\lambda\omega(x))h(x)=\lim_{|x|\to\infty}\sqrt{\exp\left(\frac{\lambda}{2}\omega(x)\right)|k(x)|}=0
	\]
	for all $\lambda>0$. By \cite[Lemma 5.8]{AC} there exists $l\in\cS_\omega(\R^N)$ for which $h(x)\leq l(x)$ for every $x\in\R^N$. Accordingly, $|k(x)|\leq l^2(x)$ for every $x\in\R^N$.
\end{proof}


\begin{thm}\label{mu} Let $\omega$ be a non-quasianalytic weight function. Then  $(\cO_{M,\omega}(\R^N),\cdot)$  is a multiplication topological algebra.
\end{thm}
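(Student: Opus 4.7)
The plan is to use the fundamental system of continuous seminorms $\{q_{m,g}\}_{m\in\N,\, g\in\cS_\omega(\R^N)}$ on $\cO_{M,\omega}(\R^N)$ recorded in (\ref{eq.nuovenorme}), combined with the bilinear continuity criterion recalled at the start of this section. Accordingly, it suffices to show: given $m\in\N$ and $g\in\cS_\omega(\R^N)$, there exist $m'\in\N$, $l\in\cS_\omega(\R^N)$, and $C>0$ such that
\[
q_{m,g}(fh)\leq C\, q_{m',l}(f)\, q_{m',l}(h),\qquad f,h\in\cO_{M,\omega}(\R^N).
\]

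The key preparatory step is to apply Lemma \ref{L.FQ} to the given $g$ to produce $l\in\cS_\omega(\R^N)$ with $|g(x)|\leq l(x)^2$ for every $x\in\R^N$. This is what allows the outer weight to be split symmetrically between the two factors. After expanding $\partial^\alpha(fh)$ by the Leibniz rule, one obtains
\[
|g(x)|\,|\partial^\alpha(fh)(x)|\leq \sum_{\beta\leq\alpha}\binom{\alpha}{\beta}\bigl[l(x)\,|\partial^\beta f(x)|\bigr]\bigl[l(x)\,|\partial^{\alpha-\beta}h(x)|\bigr].
\]

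I would then fix $m'\in\N$ with $m'\geq mL$, where $L$ is the constant appearing in (\ref{l}). Using the definition of $q_{m',l}$ to estimate each bracketed factor, the exponentials of $\varphi^*_\omega$ combine via the right-hand inequality in (\ref{secondprop}) (applied with $\lambda=m'$, $s=|\beta|$, $t=|\alpha-\beta|$) to give $\exp(m'\varphi^*_\omega(|\alpha|/m'))$. The binomial sum produces a factor $2^{|\alpha|}$, which is absorbed via (\ref{firstprop}) (valid because $m'\geq mL$) into $\exp(m\varphi^*_\omega(|\alpha|/m))$ with constant $C=e^{mL}$. Multiplying through by $\exp(-m\varphi^*_\omega(|\alpha|/m))$ and taking the supremum over $\alpha\in\N_0^N$ and $x\in\R^N$ delivers the desired estimate. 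The same estimate simultaneously shows that $fh\in\cO_{M,\omega}(\R^N)$, so the space is closed under pointwise multiplication and the product is jointly continuous; associativity and commutativity being trivial, this makes $(\cO_{M,\omega}(\R^N),\cdot)$ a topological algebra.

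The only genuinely substantive point is the decomposition $|g|\leq l\cdot l$ with $l\in\cS_\omega(\R^N)$: without such a factorization one cannot distribute the weighting between $f$ and $h$ and thereby reduce to the seminorms already available on $\cO_{M,\omega}(\R^N)$. Lemma \ref{L.FQ} is precisely what unlocks this step; once it is in place, the rest is a routine juggle of Young-conjugate estimates via (\ref{secondprop}) and (\ref{firstprop}).
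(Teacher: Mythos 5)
Your proposal is correct and follows essentially the same route as the paper's proof: the factorization $|g|\leq l^2$ via Lemma \ref{L.FQ}, the Leibniz expansion with the weight split symmetrically, the choice $m'\geq Lm$, and the combination of \eqref{secondprop} and \eqref{firstprop} to absorb $2^{|\alpha|}$ with constant $C=e^{mL}$ all match the argument given for Theorem \ref{mu}. Nothing further is needed.
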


\begin{proof}
	To see this, we fix	$m\in\N$  and $k\in \cS_\omega(\R^N)$. We choose $m'\in\N$ such that $m'\geq Lm$, where $L$ is the constant appearing in \eqref{l}, and $l\in\cS_\omega(\R^N)$ as in Lemma \ref{L.FQ}. Now, let $f,g \in \mathcal{O}_{M,\omega}(\mathbb{R}^N)$. Then we have for every $\alpha \in \mathbb{N}^N_0$ and $x\in \mathbb{R}^N$ that 
	\begin{align*}
	|k(x)||\partial^\alpha (fg)(x)|&
	\leq \sum_{\gamma\leq\alpha}\binom{\alpha}{\gamma} l(x)|\partial^\gamma f(x)|l(x)|\partial^{\alpha-\gamma} g(x)|\\&\leq 
	q_{m',l}(f)q_{m',l}(g) \sum_{\gamma\leq\alpha}\binom{\alpha}{\gamma} \exp\left(m'\varphi^*_\omega\left(\frac{|\gamma|}{m'}\right)\right)\exp\left(m'\varphi^*_\omega\left(\frac{|\alpha-\gamma|}{m'}\right)\right).\\
	\end{align*}
	Applying \eqref{secondprop} and \eqref{firstprop}, we obtain for every $\alpha\in\N_0^N$ and $x\in\R^N$ that 
	\begin{align*}
	|k(x)||\partial^\alpha (fg)(x)|&
	\leq 	q_{m',l}(f)q_{m',l}(g)2^{|\alpha|}\exp\left(m'\varphi^*_\omega\left(\frac{|\alpha|}{m'}\right)\right)\\
	&\leq  C	q_{m',l}(f)q_{m',l}(g)\exp\left(m\varphi^*_\omega\left(\frac{|\alpha|}{m}\right)\right), 
	\end{align*}
	where 	$C:=e^{mL}$. Accordingly,
	\begin{equation}\label{e.mult}
	q_{m,k}(fg)\leq C 	q_{m',l}(f)q_{m',l}(g).
	\end{equation}
	Since $f,g\in \cO_{M,\omega}(\R^N)$, $m\in\N$ and $k\in\cS_\omega(\R^N)$ are arbitrary, we can conclude from \eqref{e.mult}  that   the multiplication operator $M\colon \cO_{M,\omega}(\R^N)\times \cO_{M,\omega}(\R^N)\to \cO_{M,\omega}(\R^N)$, $(f,g)\mapsto fg$, is well-defined and  continuous, i.e., $\cO_{M,\omega}(\R^N)$ is a multiplication topological algebra. 
\end{proof}

Arguing in a similar way with simple changes (indeed, it suffices to consider $\exp(n\omega)$ instead of the function $k$ and to take $\exp\left(\frac{n}{2}\omega\right)$ instead of the function $l$), one shows the same result for $\cS_{\omega}(\R^N)$.

\begin{thm}\label{muS} Let $\omega$ be a non-quasianalytic weight function. Then  $(\cS_{\omega}(\R^N),\cdot)$  is a multiplication topological algebra.
\end{thm}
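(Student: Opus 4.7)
The plan is to transcribe the argument of Theorem~\ref{mu} almost verbatim, using the substitution suggested in the paragraph preceding the statement: the exponential weight $\exp(n\omega)$ plays the role of the test function $k\in\cS_\omega(\R^N)$, and its square root $\exp\bigl(\tfrac{n}{2}\omega\bigr)$ plays the role of the function $l$ produced by Lemma~\ref{L.FQ}. The point is that the continuous seminorms of $\cS_\omega(\R^N)$ already carry an \emph{exponential} weight $\exp(\mu\omega)$, so the appeal to Lemma~\ref{L.FQ} is now unnecessary: the trivial identity $\exp(n\omega)=\exp\bigl(\tfrac{n}{2}\omega\bigr)\cdot\exp\bigl(\tfrac{n}{2}\omega\bigr)$ performs the splitting for free.

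Concretely, fix $m,n\in\N$ and choose $m'\in\N$ with $m'\ge Lm$, where $L$ is the constant in \eqref{l}. For $f,g\in\cS_\omega(\R^N)$, Leibniz's rule combined with the factorisation of $\exp(n\omega(x))$ yields
\[
\exp(n\omega(x))|\partial^\alpha(fg)(x)|\le\sum_{\gamma\le\alpha}\binom{\alpha}{\gamma}\bigl[\exp\bigl(\tfrac{n}{2}\omega(x)\bigr)|\partial^\gamma f(x)|\bigr]\bigl[\exp\bigl(\tfrac{n}{2}\omega(x)\bigr)|\partial^{\alpha-\gamma}g(x)|\bigr].
\]
Each bracket is bounded by $q_{m',n/2}(f)\exp\bigl(m'\varphi^*_\omega(|\gamma|/m')\bigr)$ (respectively, by the analogous quantity for $g$). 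Applying the right-hand inequality in \eqref{secondprop} with $\lambda=m'$ collapses the two $\varphi^*_\omega$-factors into $\exp\bigl(m'\varphi^*_\omega(|\alpha|/m')\bigr)$; the binomial sum equals $2^{|\alpha|}$; and finally \eqref{firstprop} with $M=m'\ge mL$ produces
\[
\exp(n\omega(x))|\partial^\alpha(fg)(x)|\le C\,q_{m',n/2}(f)\,q_{m',n/2}(g)\exp\!\Bigl(m\varphi^*_\omega\Bigl(\tfrac{|\alpha|}{m}\Bigr)\Bigr),
\]
with $C:=e^{mL}$. Taking the supremum over $\alpha\in\N_0^N$ and $x\in\R^N$ gives the bilinear estimate
\[
q_{m,n}(fg)\le C\,q_{m',n/2}(f)\,q_{m',n/2}(g),
\]
valid for all $m,n\in\N$ and all $f,g\in\cS_\omega(\R^N)$, which says that multiplication is a continuous bilinear map $\cS_\omega(\R^N)\times\cS_\omega(\R^N)\to\cS_\omega(\R^N)$.

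I do not foresee any substantial obstacle: the whole proof is a direct adaptation of Theorem~\ref{mu}, and the replacement $(k,l)\leadsto\bigl(\exp(n\omega),\exp(\tfrac{n}{2}\omega)\bigr)$ is exactly the one indicated by the authors. The only minor bookkeeping is that $n/2$ need not be an integer, but the Fr\'echet topology of $\cS_\omega(\R^N)$ is generated by the family $\{q_{\lambda,\mu}\}_{\lambda,\mu>0}$ with real parameters, and in any case the elementary monotonicity $q_{m',n/2}\le q_{m',n}$ (valid since $\omega\ge 0$) lets one rephrase the estimate in terms of the integer-indexed system $\{q_{m,n}\}_{m,n\in\N}$ if desired.
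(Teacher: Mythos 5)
Your proof is correct and is precisely the argument the paper intends: it reproduces the proof of Theorem \ref{mu} with the substitution $(k,l)\leadsto(\exp(n\omega),\exp(\tfrac{n}{2}\omega))$ that the authors themselves indicate in the sentence preceding the statement, using \eqref{secondprop} and \eqref{firstprop} in exactly the same way. The closing remark about replacing $q_{m',n/2}$ by $q_{m',n}$ via monotonicity correctly handles the only bookkeeping point.
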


We now introduce the following definition.

\begin{defn} Let $\omega$ be a non-quasianalytic weight function.
	Let $E$ be a lcHs of $\omega$-ultradifferentiable functions on $\mathbb{R}^N$ continuously included in $\cE_\omega(\R^N)$ with dense range. We denote by  $M(E)$ the space of all multipliers of $E$, i.e., the largest space of $\omega$-ultradifferentiable functions on $\R^N$ satisfying  the following conditions:
	\begin{enumerate}
		\item  the multiplication operator on $E\times M(E) \to \cE_\omega(\R^N)$, $(f,g)\mapsto fg$, is well-defined and takes values in $E$;
		\item for all $f\in M(E)$, the operator $M_f\colon E\to E$, $g\mapsto  fg$ is continuous.
	\end{enumerate}
	If $E'$ is the strong dual of $E$, we denote by $M(E')$  the space of all multipliers of $E'$, i.e., the largest space of $\omega$-ultradistributions on $\R^N$ for which the following conditions are satisfied:
	\begin{enumerate}
		\item for all $T\in E'$ and $f\in M(E')$ we have that $fT$ is well-defined on $E$ and belongs to $E'$;
		\item for all $f\in M(E')$, the operator $M_f\colon E'\to E'$, $T\mapsto  fT$ is continuous.
	\end{enumerate}
\end{defn}

\begin{rem}\label{ossm} We first recall that $M(\cS_\omega(\R^N))=M(\cS'_\omega(\R^N))=\cO_{M,\omega}(\R^N)$.

	We now observe that if $E$ is a  lcHs of $\omega$-ultradifferentiable functions on $\mathbb{R}^N$ continuously included in $\cE_\omega(\R^N)$ with dense range and such that the constant functions belong to $E$, then
	\begin{equation*}
	M(E) = \{1\}\cdot M(E) \su E \cdot M(E) \su E.
	\end{equation*}
	Since $\mathcal{O}_{M,\omega}(\mathbb{R}^N)$ and $ \mathcal{O}_{C,\omega}(\mathbb{R}^N)$  contain the constant functions, we get 
	$M(\mathcal{O}_{M,\omega}(\mathbb{R}^N))\su \mathcal{O}_{M,\omega}(\mathbb{R}^N)$ and $M(\mathcal{O}_{C,\omega}(\mathbb{R}^N))\su \mathcal{O}_{C,\omega}(\mathbb{R}^N)$. But $\cO_{M,\omega}(\R^N)$ is a multiplication algebra as shown in Theorem \ref{mu}. Therefore, we clearly have that $\cO_{M,\omega}(\R^N)\su M(\mathcal{O}_{M,\omega}(\mathbb{R}^N))$. Thus, $M(\cO_{M,\omega}(\R^N))=\cO_{M,\omega}(\R^N)$.
\end{rem}	

It is also true that	
$M(\cO_{C,\omega}(\R^N))=\cO_{C,\omega}(\R^N)$, as the following result shows.

\begin{prop}\label{P.O_C-M} Let $\omega$ be a non-quasianalytic weight function. Then $M(\cO_{C,\omega}(\R^N))=\cO_{C,\omega}(\R^N)$. Hence, $(\cO_{C,\omega}(\R^N),\cdot)$ is a multiplication algebra.
\end{prop}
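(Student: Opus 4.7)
By Remark \ref{ossm}, the inclusion $M(\cO_{C,\omega}(\R^N)) \subseteq \cO_{C,\omega}(\R^N)$ is automatic, so the task reduces to checking that every $f \in \cO_{C,\omega}(\R^N)$ acts as a multiplier of the space, i.e., that pointwise multiplication maps $\cO_{C,\omega}(\R^N) \times \cO_{C,\omega}(\R^N)$ into $\cO_{C,\omega}(\R^N)$ and that for each fixed $f$ the operator $M_f$ is continuous.

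The plan is to recycle the Leibniz-rule estimate of Theorem \ref{mu}, adapted to the projective-inductive structure $\cO_{C,\omega}(\R^N) = \ind_{n} \proj_{m} \cO^m_{n,\omega}(\R^N)$. Given $f \in \cO_{n_0,\omega}(\R^N)$ and $g \in \cO_{n_1,\omega}(\R^N)$, I fix $m \in \N$, choose $m' \in \N$ with $m' \geq Lm$ ($L$ as in \eqref{l}), apply the Leibniz rule to $\partial^\alpha(fg)$, and bound each factor by the defining estimates associated with $r_{m',n_0}(f)$ and $r_{m',n_1}(g)$. The exponential weights combine to $\exp((n_0+n_1)\omega(x))$; the binomial sum of the $\varphi^*_\omega$-terms collapses by \eqref{secondprop} to $2^{|\alpha|}\exp(m'\varphi^*_\omega(|\alpha|/m'))$; and the factor $2^{|\alpha|}$ is absorbed via \eqref{firstprop}, at the cost of passing from $m'$ to $m$ and of a constant $C=e^{mL}$. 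This yields
\[
r_{m,n_0+n_1}(fg) \leq C \, r_{m',n_0}(f) \, r_{m',n_1}(g).
\]
Since $m \in \N$ was arbitrary, one concludes $fg \in \cO_{n_0+n_1,\omega}(\R^N) \subseteq \cO_{C,\omega}(\R^N)$, which takes care of condition (1) in the definition of $M(\cO_{C,\omega}(\R^N))$.

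For condition (2), i.e.~the continuity of $M_f \colon \cO_{C,\omega}(\R^N) \to \cO_{C,\omega}(\R^N)$ with $f \in \cO_{n_0,\omega}(\R^N)$ fixed, I will invoke the universal property of the inductive limit on the source: it suffices to show that $M_f \colon \cO_{n_1,\omega}(\R^N) \to \cO_{C,\omega}(\R^N)$ is continuous for every $n_1 \in \N$. The displayed estimate above, being valid for every $m \in \N$, exhibits $M_f$ as a continuous linear map from the Fr\'echet space $\cO_{n_1,\omega}(\R^N)$ into the Fr\'echet space $\cO_{n_0+n_1,\omega}(\R^N)$; post-composing with the continuous inclusion $\cO_{n_0+n_1,\omega}(\R^N) \hookrightarrow \cO_{C,\omega}(\R^N)$ delivers the desired continuity. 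I do not foresee a genuine obstacle: the analytic content is essentially the Leibniz-plus-Young-conjugate computation already carried out for $\cO_{M,\omega}(\R^N)$ in Theorem \ref{mu}, the role of the rapidly decreasing factor $k$ being replaced here by the harmless polynomial-type weight $\exp((n_0+n_1)\omega(x))$. The only bookkeeping to keep straight is the shift of the weight index from $\max(n_0,n_1)$ up to $n_0+n_1$, which the inductive-limit flexibility of $\cO_{C,\omega}(\R^N)$ absorbs at no cost.
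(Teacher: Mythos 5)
Your proposal is correct and follows essentially the same route as the paper: reduce via Remark \ref{ossm} to the inclusion $\cO_{C,\omega}(\R^N)\su M(\cO_{C,\omega}(\R^N))$, then derive the Leibniz estimate $r_{m,n_0+n_1}(fg)\leq C\,r_{m',n_0}(f)\,r_{m',n_1}(g)$ using \eqref{secondprop} and \eqref{firstprop}, and conclude continuity of $M_f$ through the Fr\'echet steps of the inductive limit. The paper writes the same estimate with $n_1+n_2=n$ in place of your $n_0,n_1$, but the argument is identical.
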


\begin{proof} By Remark \ref{ossm} it suffices to show that 
	$\cO_{C,\omega}(\R^N)\su M(\cO_{C,\omega}(\R^N))$. In order to do this, we fix $n,m\in\N$ with $n\geq 2$ and choose $m'\geq Lm$, with $L$ the constant appearing in \eqref{l}. Let $n_1,n_2\in\N$ be such that $n_1+n_2=n$. If $f\in \cO_{n_1,\omega}(\R^N)=\cap_{r=1}^\infty \cO_{n_1,\omega}^{r}(\R^N)$ and $g\in\cO_{n_2,\omega}(\R^N)= \cap_{r=1}^\infty \cO_{n_2,\omega}^{r}(\R^N)$, then via \eqref{secondprop} and \eqref{firstprop} we have for every $\alpha\in\N_0^N$ and $x\in\R^N$ that 
	\begin{align*}
	|\partial^\alpha(fg)(x)|&\leq \sum_{\gamma\leq \alpha }\binom{\alpha}{\gamma}|\partial^\gamma f(x)||\partial^{\alpha-\gamma}g(x)|\\&\leq
	r_{m',n_1}(f)r_{m',n_2}(g)\sum_{\gamma\leq \alpha }\binom{\alpha}{\gamma}\exp\left(n_1\omega(x)+m'\varphi_\omega^*\left(\frac{|\gamma|}{m'}\right)\right)\times\\
	&\qquad \times \exp\left(n_2\omega(x)+m'\varphi_\omega^*\left(\frac{|\alpha-\gamma|}{m'}\right)\right)\\
	&\leq C r_{m',n_1}(f)r_{m',n_2}(g)\exp(n\omega(x))\exp\left(m\varphi_\omega^*\left(\frac{|\alpha|}{m}\right)\right),
	\end{align*}
	where $C:=e^{mL}$.
	Accordingly, we have
	\begin{equation}\label{e.Multip}
	r_{m,n}(fg)\leq C r_{m',n_1}(f)r_{m',n_2}(g).
	\end{equation}
	Since $m\in\N$, $f\in\cO_{n_1,\omega}(\R^N) $ and $g\in  \cO_{n_2,\omega}(\R^N)$ are arbitrary, from \eqref{e.Multip} it follows that the multiplication operator $M\colon  \cO_{n_1,\omega}(\R^N)\times  \cO_{n_2,\omega}(\R^N)\to  \cO_{n,\omega}(\R^N)$ is continuous. But also $n\in\N$ is arbitrary and $\cO_{C,\omega}(\R^N)$ is the inductive limit of the Fr\'echet spaces $ \cO_{n,\omega}(\R^N)$. So, we can conclude that the multiplication operator $M\colon \cO_{C,\omega}(\R^N)\times \cO_{C,\omega}(\R^N)\to \cO_{C,\omega}(\R^N)$ is well-defined. Moreover,  \eqref{e.Multip} also implies that the operator $M_f\colon \cO_{C,\omega}(\R^N)\to \cO_{C,\omega}(\R^N)$, $g\mapsto fg$, is  continuous for all $f\in \cO_{C,\omega}(\R^N)$. Therefore, $\cO_{C,\omega}(\R^N)\su M(\cO_{C,\omega}(\R^N))$.
	This completes the proof.
\end{proof}
\begin{rem}
	For any non-quasianalytic weight function $\omega$ such that  $\log(1+t)=o(\omega(t))$ as $t\to\infty$., the space $(\cO_{C,\omega}(\R^N),\cdot)$ is not a multiplication topological algebra (see Section 4).  
\end{rem}
\begin{cor}\label{C.Moltiplicatori} Let $\omega$ be a non-quasianalytic weight function. Then $M(\mathcal{O}'_{M,\omega}(\mathbb{R}^N))= \mathcal{O}_{M,\omega}(\mathbb{R}^N)$ and $ M(\mathcal{O}'_{C,\omega}(\mathbb{R}^N))= \mathcal{O}_{C,\omega}(\mathbb{R}^N)$. 
\end{cor}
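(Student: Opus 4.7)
The plan is to derive the corollary directly from Theorem \ref{mu} and Proposition \ref{P.O_C-M} by a transposition argument, in the same spirit as the identifications $M(\cS_\omega(\R^N))=M(\cS'_\omega(\R^N))=\cO_{M,\omega}(\R^N)$ recalled in Remark \ref{ossm}. Since the two equalities are proved in exactly the same way, I would write out the case of $\cO_{M,\omega}(\R^N)$ in detail and indicate that the $\cO_{C,\omega}(\R^N)$ case is verbatim identical after replacing Theorem \ref{mu} with Proposition \ref{P.O_C-M}.

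For the inclusion $\cO_{M,\omega}(\R^N)\subseteq M(\cO'_{M,\omega}(\R^N))$, I would fix $f\in\cO_{M,\omega}(\R^N)$ and invoke Theorem \ref{mu} to conclude that $M_f\colon\cO_{M,\omega}(\R^N)\to\cO_{M,\omega}(\R^N)$, $g\mapsto fg$, is continuous. Its transpose $M_f^t\colon\cO'_{M,\omega}(\R^N)\to\cO'_{M,\omega}(\R^N)$ is then automatically continuous, and it agrees with the natural dual product $T\mapsto fT$ defined by $\langle fT,g\rangle:=\langle T,fg\rangle$ for $g\in\cO_{M,\omega}(\R^N)$. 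Both defining conditions of membership in $M(\cO'_{M,\omega}(\R^N))$ are thus satisfied by $f$. The reflexivity and completeness of $\cO_{M,\omega}(\R^N)$ supplied by Theorem \ref{T.Montelspaces} guarantee that this transposition step is clean.

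For the reverse inclusion $M(\cO'_{M,\omega}(\R^N))\subseteq\cO_{M,\omega}(\R^N)$, the key is to unwind what the definition of $M(E')$ actually requires. If $f\in M(\cO'_{M,\omega}(\R^N))$, then for every $T\in\cO'_{M,\omega}(\R^N)$ the pairing $\langle fT,g\rangle=\langle T,fg\rangle$ must define an element of $\cO'_{M,\omega}(\R^N)$ on $g\in\cO_{M,\omega}(\R^N)$. For $T$ to evaluate on $fg$, the product $fg$ must belong to $\cO_{M,\omega}(\R^N)$ for every $g\in\cO_{M,\omega}(\R^N)$. Hence $f\in M(\cO_{M,\omega}(\R^N))=\cO_{M,\omega}(\R^N)$ by Remark \ref{ossm}; equivalently, one may use the trick of that remark and evaluate the condition at the constant function $g=1\in\cO_{M,\omega}(\R^N)$ to conclude directly that $f=f\cdot 1\in\cO_{M,\omega}(\R^N)$.

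The main delicate point, in my view, is this reverse inclusion: one must make sure that the abstract product $fT$ between an $\omega$-ultradistribution $f$ and $T\in\cO'_{M,\omega}(\R^N)$ is really the one obtained by transposition of pointwise multiplication, so that the condition $fT\in\cO'_{M,\omega}(\R^N)$ translates into $fg\in\cO_{M,\omega}(\R^N)$ for all $g\in\cO_{M,\omega}(\R^N)$. Once this translation is accepted, the fact that constants belong to $\cO_{M,\omega}(\R^N)$ (and to $\cO_{C,\omega}(\R^N)$) forces $f$ itself into the primal space, and Remark \ref{ossm} (respectively Proposition \ref{P.O_C-M}) closes the argument.
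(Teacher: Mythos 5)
Your proposal is correct and follows essentially the same route as the paper: the paper's proof is a one-line appeal to Theorem \ref{mu}, Proposition \ref{P.O_C-M} and the fact that multiplication of $\omega$-ultradistributions by $\omega$-ultradifferentiable functions is defined by transposition, which is exactly the argument you spell out (transpose of the continuous $M_f$ for one inclusion, unwinding the transposed pairing together with the constant function $\mathbf{1}$ for the other). The extra appeal to reflexivity and completeness in your transposition step is harmless but not needed, since the transpose of a continuous linear map is always continuous between the strong duals.
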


\begin{proof} The result follows from  Theorem \ref{mu}  and Proposition \ref{P.O_C-M}, taking into account of the fact that 
	the multiplication of $\omega$-ultradistributions  with $\omega$-ultradifferentiable functions is defined by transposition.
	\end{proof}

We now pass to the case $(\cS_\omega(\R^N),\star)$, for which the following result is true.

\begin{thm}\label{bili}
	Let $\omega$ be a non-quasianalytic weight function. Then $(\cS_\omega(\R^N),\star)$ is a convolution topological algebra.
\end{thm}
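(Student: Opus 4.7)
The natural approach is to leverage the Fourier transform together with Theorem \ref{muS}. By construction the space $\cS_\omega(\R^N)$ is symmetric in $f$ and $\hat f$, so $\mathcal{F}\colon\cS_\omega(\R^N)\to\cS_\omega(\R^N)$ is a topological isomorphism, and $\cS_\omega(\R^N)\hookrightarrow L^1(\R^N)$ continuously. For $f,g\in\cS_\omega(\R^N)$, the convolution $f\star g$ is therefore well-defined, and the exchange formula $\widehat{f\star g}=c\,\hat f\cdot\hat g$ holds for a normalization constant $c$. By Theorem \ref{muS} the product $\hat f\cdot\hat g$ lies in $\cS_\omega(\R^N)$, so $f\star g=c^{-1}\mathcal{F}^{-1}(\hat f\cdot\hat g)\in\cS_\omega(\R^N)$. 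Continuity of the bilinear convolution is then immediate by viewing $\star$ as the composition of three continuous maps: $(\mathcal{F},\mathcal{F})$, pointwise multiplication, and $c^{-1}\mathcal{F}^{-1}$.

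If instead one wants a direct proof in the style of Theorem \ref{mu}, fix $\lambda,\mu>0$ and $f,g\in\cS_\omega(\R^N)$. Using $\partial^\alpha(f\star g)=f\star\partial^\alpha g$ together with the inequality $\omega(x)\le K(1+\omega(y)+\omega(x-y))$, which follows from \eqref{sub} via the triangle inequality and the monotonicity of $\omega$, one obtains
\[
\exp(\mu\omega(x))|\partial^\alpha(f\star g)(x)|\le e^{K\mu}\int_{\R^N}\exp(K\mu\,\omega(y))|f(y)|\exp(K\mu\,\omega(x-y))|\partial^\alpha g(x-y)|\,dy.
\]
I would extract the $g$-factor using $\exp(K\mu\omega(x-y))|\partial^\alpha g(x-y)|\le q_{\lambda,K\mu}(g)\exp(\lambda\varphi^*_\omega(|\alpha|/\lambda))$, and control the remaining integral in $y$ by inserting an auxiliary decay $\exp(-\nu\omega)$ with $\nu\ge(N+1)/b$ so that $\exp(-\nu\omega)\in L^1(\R^N)$ by \eqref{eq.Lpspazi}, then bounding the leftover sup by $\|\exp((K\mu+\nu)\omega)f\|_\infty\le q_{1,K\mu+\nu}(f)$ (taking $\alpha=0$ and recalling $\varphi^*_\omega(0)=0$). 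Dividing by $\exp(\lambda\varphi^*_\omega(|\alpha|/\lambda))$ and taking the suprema in $\alpha\in\N_0^N$ and $x\in\R^N$ yields an estimate of the form
\[
q_{\lambda,\mu}(f\star g)\le C\,q_{1,K\mu+\nu}(f)\,q_{\lambda,K\mu}(g),
\]
with $C=e^{K\mu}\int_{\R^N}\exp(-\nu\omega(y))\,dy$, which simultaneously delivers the well-posedness of $\star$ on $\cS_\omega(\R^N)$ and the continuity of the bilinear map.

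The only mildly delicate point in the direct argument is splitting the weight $\exp(\mu\omega(x))$ across the convolution by condition $(\alpha)$ and then recovering an $L^1$ integrand by inserting the $\exp(-\nu\omega)$ factor; both moves are essentially the ingredients already exploited in the proofs of Theorems \ref{mu} and \ref{muS}, so I do not anticipate any substantive new difficulty. The Fourier route, by contrast, bypasses all derivative bookkeeping but relies on the Fourier isomorphism from Bj\"orck's theory already cited in Section 2.
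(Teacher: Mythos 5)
Your direct estimate is essentially the paper's own proof: the authors likewise split $\exp(n\omega(x))$ via condition $(\alpha)$, insert an integrable factor $\exp(-n_0\omega)$ with $n_0\ge (N+1)/b$ using \eqref{eq.Lpspazi}, and arrive at a bound of the form $q_{m,n}(f\star g)\le e^{Kn}\|\exp(-n_0\omega)\|_1\,q_{m,n'}(f)\,q_{m,n'}(g)$. Your Fourier-transform route is also exactly the alternative the authors record in the remark immediately following the theorem, so both of your arguments are correct and match the paper.
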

\begin{proof}
	To see this we have to show that the bilinear map $\star\colon  \cS_\omega(\R^N)\times \cS_\omega(\R^N)\to \cS_\omega(\R^N)$ is continuous.
	So, let $n_0\in\N$ be fixed such that $n_0\geq \frac{N+1}{b}$. Hence, $\exp(-n_0\omega)\in L^1(\R^N)$ by \eqref{eq.Lpspazi}. Fixed $n\in\N$ and $f,g \in \cS_\omega(\R^N)$, we have
	\begin{align*}
	\partial^\alpha (f\star g)(x)=\int_{\mathbb{R}^N} f(y)\partial^\alpha g (x-y)\, dy, \; x\in\R^N, \; \alpha\in\N^N_0.
	\end{align*}
	By \eqref{sub} $\omega(x)=\omega(x-y+y)\leq K(1+\omega(x-y)+\omega(y))$. Hence, we get for every $x\in \R^N$ and $\alpha \in \N^N_0$ that
	\begin{align*}
	|\partial^\alpha (f\star g)(x)|\exp(n\omega(x))& \leq e^{Kn}\int_{\mathbb{R}^N} |f(y)|\exp(Kn\omega(y))|\partial^\alpha g(x-y)|\exp(Kn\omega(x-y))\, dy\\ &\leq e^{Kn}\|\exp(-n_0\omega)\|_1 \|f\exp((Kn+n_0)\omega)\|_\infty\|\partial^\alpha g\exp(Kn\omega)\|_\infty.
	\end{align*}
	Therefore, for all $m,n\in\N$
	\begin{align*}
	q_{m,n}(f\star g)&\leq e^{Kn}\|\exp(-n_0\omega)\|_1 \|f\exp((Kn+n_0)\omega)\|_\infty q_{m,Kn}(g)\\ 
	&\leq e^{Kn}\|\exp(-n_0\omega)\|_1 q_{m,K n+n_0}(f)q_{m,Kn}(g)\\
	&\leq e^{Kn}\|\exp(-n_0\omega)\|_1 q_{m,n'}(f)q_{m,n'}(g), 
	\end{align*}
	with  $n'\in\N$ such that $n'\geq Kn+n_0$.
\end{proof}
\begin{rem} We point out that the result in Theorem \ref{bili} can be achieved by applying Theorem \ref{muS} combined with the use of the Fourier transform, which is a topological isomorphism from $\cS_\omega(\R^N)$ onto itself such that $\widehat{f\star g}=\hat{f}\hat{g}$, for all $f,g\in  \cS_\omega(\R^N)$, see \cite{B}.
	
\end{rem}

The next aim is to show  that  $(\cO'_{C,\omega}(\R^N),\star)$ is a convolution topological  algebra. Hence, let us recall that 
in case $T\in \cO'_{C,\omega}(\R^N)$ and $S\in \cS'_\omega(\R^N)$ the convolution $T\star S$ is well defined on $\cS_\omega(\R^N)$ and belongs  to $\cS'_{\omega}(\R^N)$. Indeed,  for all $f\in \cS_\omega(\R^N)$ we have $\check{T}\star f\in \cS_\omega(\R^N)$ and the operator $C_T\colon \cS_\omega(\R^N)\to  \cS_\omega(\R^N)$, $f\mapsto \check{T}\star f$, is continuous, see, \cite[Theorem 5.3]{AC2} (the distribution $\check{T}$ is  defined by  $f\mapsto \check{T}(f):=T(\check{f})$, with $\check{f}(x):=f(-x)$ for all $x\in\R^N$. Furthermore, for $R$ a distribution and $f$ a function the convolution $R\star f$ is defined by $(R\star f)(x):=\langle R_y,\tau_x\check{f}\rangle$, where the notation $R_y$ means that the distribution $R$ acts on a function $\phi(x -y)$,
when the latter is regarded as a function of the variable $y$). Hence, the  convolution $T\star S$ defined by $\langle T\star S, f\rangle =\langle S, \check{T}\star f\rangle$, for $f\in \cS_\omega(\R^N)$, is clearly a well-defined element of $\cS'_\omega(\R^N)$. Since $C_T$ is a continuous operator from $\cS_\omega(\R^N)$ into $\cO_{C,\omega}(\R^N)$ and the space $\cS_\omega(\R^N)$ is dense in $\cO_{C,\omega}(\R^N)$, we also have that $T\star S\in \cO'_{C,\omega}(\R^N)$ (in the sense that $T\star S$ extends continuously on whole  $\cO_{C,\omega}(\R^N)$).

Moreover, for any $\omega$ non-quasianalytic weight function satisfying the condition $\log(1+t)=o(\omega(t))$ as $t\to\infty$, the  Fourier transform $\cF$ is a topological isomorphism from  the space $\cO'_{C,\omega}(\R^N)$  onto the space $\cO_{M,\omega}(\R^N)$ (see \cite[Theorem 6.1]{AC2}). Accordingly, we have $\mathcal{F}(\mathcal{O}'_{C,\omega}(\mathbb{R}^N))=\mathcal{O}_{M,\omega}(\mathbb{R}^N)$ and $\mathcal{F}(\mathcal{O}_{M,\omega}(\mathbb{R}^N))=\mathcal{O}'_{C,\omega}(\mathbb{R}^N$. In particular, for all $T\in \cO'_{C,\omega}(\R^N)$ and $S\in \cS'_\omega(\R^N)$  the convolution $T\star S$ satisfies the following property:
\begin{equation}\label{eq.FT-C}
\cF(T\star S)=\cF(T)\cF(S).
\end{equation}
We can now state the following result.

\begin{thm}\label{T.Convolution} Let $\omega$ be a non-quasianalytic weight function such that $\log(1+t)=o(\omega(t))$ for $t\to\infty$. Then  $(\cO'_{C,\omega}(\R^N),\star)$ is a convolution topological algebra.
\end{thm}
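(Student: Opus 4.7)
The plan is to transfer the convolution structure on $\cO'_{C,\omega}(\R^N)$ to the multiplication structure on $\cO_{M,\omega}(\R^N)$ via the Fourier transform, and then to invoke Theorem \ref{mu}. I would proceed in three steps.

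First, I would verify that the convolution map $\star\colon \cO'_{C,\omega}(\R^N)\times \cO'_{C,\omega}(\R^N)\to \cO'_{C,\omega}(\R^N)$ is well-defined. Given $T,S\in \cO'_{C,\omega}(\R^N)$, the continuous inclusion $\cO'_{C,\omega}(\R^N)\hookrightarrow \cS'_\omega(\R^N)$ recalled in the excerpt allows us to view $S$ as an element of $\cS'_\omega(\R^N)$, and the discussion preceding the statement shows that $T\star S$ is then a well-defined element of $\cO'_{C,\omega}(\R^N)$.

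Second, I would write the convolution as a composition. Since $\log(1+t)=o(\omega(t))$ as $t\to\infty$, the Fourier transform $\cF$ is a topological isomorphism of $\cO'_{C,\omega}(\R^N)$ onto $\cO_{M,\omega}(\R^N)$, and by \eqref{eq.FT-C} we have $\cF(T\star S)=\cF(T)\cF(S)$ for every $T,S\in \cO'_{C,\omega}(\R^N)$. Denoting by $M\colon \cO_{M,\omega}(\R^N)\times\cO_{M,\omega}(\R^N)\to\cO_{M,\omega}(\R^N)$ the pointwise multiplication, this yields the factorization
\[
T\star S = \cF^{-1}\bigl(M(\cF(T),\cF(S))\bigr),\qquad T,S\in \cO'_{C,\omega}(\R^N).
\]
Third, I would apply Theorem \ref{mu}, which asserts that $M$ is a continuous bilinear map on $\cO_{M,\omega}(\R^N)\times\cO_{M,\omega}(\R^N)$. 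Since $\cF$ and $\cF^{-1}$ are continuous linear isomorphisms between $\cO'_{C,\omega}(\R^N)$ and $\cO_{M,\omega}(\R^N)$, the bilinear map $(T,S)\mapsto \cF^{-1}\circ M\circ(\cF\times\cF)(T,S)$ is the composition of continuous maps, hence continuous. This gives joint continuity of $\star$ and concludes that $(\cO'_{C,\omega}(\R^N),\star)$ is a convolution topological algebra.

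There is no real obstacle here: everything needed, namely the exchange formula $\cF(T\star S)=\cF(T)\cF(S)$ on $\cO'_{C,\omega}(\R^N)$, the fact that $\cF$ intertwines the two spaces as a topological isomorphism, and the continuity of multiplication on $\cO_{M,\omega}(\R^N)$, has already been established. The potential subtlety lies only in checking that the factorization makes sense, i.e., that $\cF(T)\cF(S)$ really belongs to $\cO_{M,\omega}(\R^N)$ (which is immediate from Theorem \ref{mu}) and that it coincides with $\cF(T\star S)$ in $\cO_{M,\omega}(\R^N)$ (which follows from \eqref{eq.FT-C} interpreted inside $\cS'_\omega(\R^N)$, where both sides live unambiguously).
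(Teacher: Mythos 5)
Your proposal is correct and follows essentially the same route as the paper's own proof: well-definedness of $T\star S$ via the inclusion $\cO'_{C,\omega}(\R^N)\hookrightarrow \cS'_\omega(\R^N)$, then the factorization of $\star$ through the Fourier transform and the exchange formula \eqref{eq.FT-C}, and finally an appeal to the continuity of multiplication on $\cO_{M,\omega}(\R^N)$ from Theorem \ref{mu}. No discrepancies to report.
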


\begin{proof} 
	We first observe that if $S, T\in\cO'_{C,\omega}(\R^N)\subset \cS'_{\omega}(\R^N)$, then the convolutions $T\star S$ and $S\star T$ are well defined and belong to $\cS'_\omega(\R^N)$. Since $\cF(S), \cF(T)\in \cO_{M,\omega}(\R^N)$ and so, $\cF(S)\cF(T)=\cF(T)\cF(S)$,  by \eqref{eq.FT-C} it follows that $S\star T=T\star S\in \cO'_{C,\omega}(\R^N)$. This means that $(\cO'_{C,\omega}(\R^N),\star)$ is a convolution  algebra. Finally,
	since  the Fourier transform is a topological isomorphim from $\cO'_{C,\omega}(\R^N)$ onto $\cO_{M,\omega}(\R^N)$ satisfying equation \eqref{eq.FT-C}, we can apply Theorem \ref{mu} to obtain that $(\cO'_{C,\omega}(\R^N),\star)$ is a topological algebra.  
\end{proof}

As done in the case of multipliers of lcHs' of $\omega$-ultradifferentiable functions on $\R^N$, we now introduce the space of convolutors.

\begin{defn} Let
	$E$ be a lcHs of $\omega$-ultradifferentiable functions on $\mathbb{R}^N$. We denote by $C(E)$ the space of all convolutors of $E$, i.e., the largest space of $\omega$-ultradistributions on $\mathbb{R}^N$ satisfying the following conditions:
	\begin{enumerate}
		\item 
		the convolution operator on $E\times C(E)\to \cE_\omega(\R)$, $(f, T)\mapsto T\star f$, is well-defined  and takes value in $E$;
		\item for all $T\in C(E)$ the operator $C_T\colon E\to E$, $f\mapsto T\star f$, is continuous.
	\end{enumerate} 
	We denote by $C(E')$ the space of all convolutors of $E'$, i.e., the largest space  of $\omega$-ultradistributions on $\mathbb{R}^N$ satisfying the following conditions:
	\begin{enumerate}
		\item for all $S\in E'$ and $T\in C(E')$ we have that $T\star S$ is well-defined on $E$ and belongs to $E'$;
		\item for all $T\in C(E')$ the operator $C_T\colon E'\to E'$, $S\mapsto T\star S$, is continuous.
	\end{enumerate}
\end{defn}

\begin{rem}\label{OssC} Let $\omega$ be a  non-quasianalytic weight function $\omega$ such that $\log(1+t)=o(\omega(t))$ for $t\to\infty$.	Then $C(\cS_\omega(\R^N))=C(\cS'_\omega(\R^N))=\cO'_{C,\omega}(\R^N)$.
	
	We point out that the equation \eqref{eq.FT-C} is also satisfied for any pairs $(T,S)$ of $\omega$-ultradistributions in $\cO'_{C,\omega}(\R^N)\times \cO_{C,\omega}(\R^N)$,  $\cO'_{M,\omega}(\R^N)\times \cO_{M,\omega}(\R^N)$, $\cO'_{C,\omega}(\R^N)\times \cO'_{C,\omega}(\R^N)$ and $\cO'_{M,\omega}(\R^N)\times \cO'_{M,\omega}(\R^N)$, because these spaces are continuously included in $\cO'_{C,\omega}(\R^N)\times \cS'_{\omega}(\R^N)$. On the other hand, by Remark \ref{ossm} and Proposition \ref{P.O_C-M} we have that $M(\cO_{C,\omega}(\R^N))=\cO_{C,\omega}(\R^N)$ and $M(\cO_{M,\omega}(\R^N))=\cO_{M,\omega}(\R^N)$. Recalling that the Fourier transfom is a topological isomorphism from $\cO'_{C,\omega}(\R^N)$ onto $\cO_{M,\omega}(\R^N)$, these facts yield that $C(\cO'_{M,\omega}(\R^N))
	= \cO'_{M,\omega}(\R^N)$ and $C(\cO'_{C,\omega}(\R^N))= \cO'_{C,\omega}(\R^N)$. Furthemore, by Corollary \ref{C.Moltiplicatori} we have that $M(\cO'_{C,\omega}(\R^N))=\cO_{C,\omega}(\R^N)$ and $M(\cO'_{M,\omega}(\R^N))=\cO_{M,\omega}(\R^N)$. Thus, by the same arguments we obtain  that 
	$C(\cO_{M,\omega}(\R^N))=\cO'_{M,\omega}(\R^N)$ and $C(\cO_{C,\omega}(\R^N))=\cO'_{C,\omega}(\R^N)$.

	We summarize our results in this simple table.
	\\[0.5cm]
	\begin{tabular}{|l|l|l|}
		\hline
		E&M(E)&C(E)\\
		\hline
		$\cS_{\omega}(\R^N)$&$\cO_{M,\omega}(\R^N)$&$\cO'_{C,\omega }(\R^N)$\\
		\hline
		$\cS'_{\omega}(\R^N)$&$\cO_{M,\omega}(\R^N)$&$\cO'_{C,\omega }(\R^N)$\\
		\hline
		$\cO_{M,\omega}(\R^N)$&$\cO_{M,\omega}(\R^N)$&$\cO'_{M,\omega}(\R^N)$\\
		\hline
		$\cO'_{M,\omega}(\R^N)$&$\cO_{M,\omega}(\R^N)$&$\cO'_{M,\omega}(\R^N)$\\
		\hline
		$\cO_{C,\omega}(\R^N)$&$\cO_{C,\omega}(\R^N)$&$\cO'_{C,\omega}(\R^N)$\\
		\hline
		$\cO'_{C,\omega}(\R^N)$&$\cO_{C,\omega}(\R^N)$&$\cO'_{C,\omega}(\R^N)$\\
		\hline
	\end{tabular}
\end{rem}

From Remark \ref{OssC}, it follows this result.

\begin{cor} Let $\omega$ be a non-quasianalytic weight function such that $\log(1+t)=o(\omega(t))$ for $t\to\infty$. Then $\cO'_{M,\omega}(\R^N)$ is a convolution algebra.
\end{cor}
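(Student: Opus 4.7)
The plan is essentially to unfold what has already been established in Remark \ref{OssC} and to transfer the known algebra structure on $\cO_{C,\omega}(\R^N)$ via the Fourier transform, paralleling the argument given for Theorem \ref{T.Convolution}.

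First I would invoke Remark \ref{OssC}, which asserts that $C(\cO'_{M,\omega}(\R^N))=\cO'_{M,\omega}(\R^N)$. By the very definition of the convolutor space $C(E')$ applied to $E'=\cO'_{M,\omega}(\R^N)$, this equality already provides the bilinear map $\star\colon \cO'_{M,\omega}(\R^N)\times \cO'_{M,\omega}(\R^N)\to \cO'_{M,\omega}(\R^N)$ that is required for the algebra structure: for every pair $T,S\in \cO'_{M,\omega}(\R^N)$, the convolution $T\star S$ is well-defined on $\cO_{M,\omega}(\R^N)$ and belongs to $\cO'_{M,\omega}(\R^N)$, while for each fixed $T$ the operator $C_T$ is even continuous on $\cO'_{M,\omega}(\R^N)$.

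Alternatively, and more constructively, I would argue via the Fourier transform. Starting from the topological isomorphism $\cF\colon \cO_{M,\omega}(\R^N)\to \cO'_{C,\omega}(\R^N)$ of \cite[Theorem 6.1]{AC2} and exploiting the involutive nature of $\cF$ (up to reflection), one obtains a topological isomorphism $\cF\colon \cO'_{M,\omega}(\R^N)\to \cO_{C,\omega}(\R^N)$. Given $T,S\in \cO'_{M,\omega}(\R^N)\subset \cO'_{C,\omega}(\R^N)$, Theorem \ref{T.Convolution} guarantees that $T\star S$ is a well-defined element of $\cO'_{C,\omega}(\R^N)$, and the extended identity \eqref{eq.FT-C} (valid on the relevant pairs according to Remark \ref{OssC}) yields $\cF(T\star S)=\cF(T)\cF(S)$. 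Since $\cF(T),\cF(S)\in \cO_{C,\omega}(\R^N)$ and the latter is a multiplication algebra by Proposition \ref{P.O_C-M}, the product $\cF(T)\cF(S)$ belongs to $\cO_{C,\omega}(\R^N)$, so applying $\cF^{-1}$ gives $T\star S\in \cO'_{M,\omega}(\R^N)$. Bilinearity is automatic, and associativity and commutativity of $\star$ are inherited from the ambient convolution topological algebra $\cO'_{C,\omega}(\R^N)$.

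The main subtlety, and the reason the statement claims only an algebra rather than a topological algebra, is that the multiplication on $\cO_{C,\omega}(\R^N)$ is not jointly continuous, as noted in the Remark following Proposition \ref{P.O_C-M}. Consequently the transport via $\cF$ does not upgrade to joint continuity of $\star$ on $\cO'_{M,\omega}(\R^N)$, which is why the conclusion must be stated in the weaker form. The only real piece of bookkeeping is the verification of the Fourier isomorphism $\cF(\cO'_{M,\omega}(\R^N))=\cO_{C,\omega}(\R^N)$; once this is recorded, the corollary follows by direct transport.
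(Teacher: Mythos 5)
Your proposal is correct and follows essentially the same route as the paper, whose entire proof is the one-line observation that the statement follows from Remark \ref{OssC} (i.e.\ from $C(\cO'_{M,\omega}(\R^N))=\cO'_{M,\omega}(\R^N)$); your second, ``constructive'' paragraph merely unfolds the Fourier-transform argument already contained in that remark. Your closing observation that joint continuity cannot be transported (consistent with Proposition \ref{disc5}) correctly explains why only an algebra, and not a topological algebra, is claimed.
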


\begin{rem} For any non-quasianalytic weight function $\omega$ such that $\log(1+t)=o(\omega(t))$ for $t\to\infty$, the space $(\cO'_{M,\omega}(\R^N),\star)$ is not a convolution topological algebra (see Section 4).
	
\end{rem}

\section{Hypocontinuity and discontinuity}
\subsection{Hypocontinuity}
In this final section we discuss the hypocontinuity of the multiplication mapping on some pairs between the spaces $\cO_{M,\omega}(\R^N)$, $\cO_{C,\omega}(\R^N)$, $\cS_{\omega}(\R^N)$ and their duals. 

Let us recall that 
if $E$, $F$ and $G$ are  topological vector spaces and $b\colon  E\times F\to G$  is bilinear map, then
$b$ is called \textit{hypocontinuous} if the following holds:

(1) For every bounded subset $A$ of $E$, the set $\{b(x, \cdot)\colon x\in A\}$ is equicontinuous
in $\cL(F,G)$;

(2) For every bounded subset $B$ of $F$, the set $\{b(\cdot, y)\colon  y\in B \}$ is equicontinuous
in $\cL(E,G)$.

\begin{prop}\label{P.Mult1} Let $\omega$ be a non-quasianalytic weight function. Then the multiplication operator $M\colon \cO_{M,\omega}(\R^N)\times \cS_\omega(\R^N)\to \cS_\omega(\R^N)$, $(f,g)\mapsto fg$, is separately continuous and hence, a hypocontinuous bilinear mapping.
	\end{prop}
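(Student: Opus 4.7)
My plan is to establish separate continuity of the multiplication by a direct Leibniz-type estimate, and then upgrade to hypocontinuity via the Banach--Steinhaus theorem, exploiting the fact that both $\cO_{M,\omega}(\R^N)$ (ultrabornological by \cite[Theorem~5.3]{De}) and $\cS_\omega(\R^N)$ (a Fr\'echet space) are barrelled.

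For the separate continuity I mimic the computation in the proof of Theorem~\ref{mu}. Fix $m,n\in\N$ and pick $m'\in\N$ with $m'\geq mL$. Given $f\in \cO^{m'}_{n_0,\omega}(\R^N)$ for some $n_0\in\N$ and $g\in\cS_\omega(\R^N)$, apply the Leibniz rule to $\partial^\alpha(fg)$, and bound
$$
|\partial^\gamma f(x)|\leq r_{m',n_0}(f)\exp(n_0\omega(x))\exp\!\left(m'\varphi^*_\omega\!\left(\tfrac{|\gamma|}{m'}\right)\right)
$$
from the definition of $\cO^{m'}_{n_0,\omega}(\R^N)$, together with
$$
|\partial^{\alpha-\gamma}g(x)|\leq q_{m',n+n_0}(g)\exp\!\bigl(-(n+n_0)\omega(x)\bigr)\exp\!\left(m'\varphi^*_\omega\!\left(\tfrac{|\alpha-\gamma|}{m'}\right)\right)
$$
from the $\cS_\omega(\R^N)$-seminorm. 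After multiplying by $\exp(n\omega(x))$, the $\omega$-weights cancel exactly; applying \eqref{secondprop}, $\sum_{\gamma\leq\alpha}\binom{\alpha}{\gamma}=2^{|\alpha|}$, and \eqref{firstprop} yields
$$
q_{m,n}(fg)\leq e^{mL}\,r_{m',n_0}(f)\,q_{m',n+n_0}(g).
$$
For fixed $g$ this makes $f\mapsto fg$ continuous from each Banach step $\cO^{m'}_{n_0,\omega}(\R^N)$ into $\cS_\omega(\R^N)$, hence continuous from the (LB)-space $\cO^{m'}_\omega(\R^N)$ into $\cS_\omega(\R^N)$ by the universal property, and therefore from the projective limit $\cO_{M,\omega}(\R^N)$ into $\cS_\omega(\R^N)$. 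For fixed $f$, the same inequality (with $n_0$ chosen so that $f\in\cO^{m'}_{n_0,\omega}(\R^N)$) shows continuity of $g\mapsto fg$ on $\cS_\omega(\R^N)$, which is in any case the multiplier property already recalled after Definition~\ref{D.spaziO}.

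To deduce the hypocontinuity, I invoke Banach--Steinhaus in both variables. For a bounded set $A\su \cO_{M,\omega}(\R^N)$ and each $g\in\cS_\omega(\R^N)$, the image $\{fg:f\in A\}$ is bounded in $\cS_\omega(\R^N)$ since $f\mapsto fg$ is continuous, so the family $\{M_f:f\in A\}\su\cL(\cS_\omega(\R^N))$ is pointwise bounded; barrelledness of $\cS_\omega(\R^N)$ gives equicontinuity. Symmetrically, for bounded $B\su\cS_\omega(\R^N)$ and fixed $f$, continuity of $g\mapsto fg$ makes $\{fg:g\in B\}$ bounded in $\cS_\omega(\R^N)$, so $\{f\mapsto fg:g\in B\}$ is pointwise bounded, and barrelledness of $\cO_{M,\omega}(\R^N)$ yields equicontinuity. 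The only mild obstacle is that $r_{m',n_0}$ is \emph{not} a continuous seminorm on $\cO_{M,\omega}(\R^N)$, so the above estimate does not directly deliver joint continuity; one has to pass through the step Banach spaces and use the projective-inductive description of $\cO_{M,\omega}(\R^N)$ to extract separate continuity. Once this is done, the Banach--Steinhaus step is routine.
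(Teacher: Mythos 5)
Your Leibniz estimate is correct, and your overall strategy (separate continuity by a direct estimate, then hypocontinuity from barrelledness via Banach--Steinhaus, which is exactly what the paper's appeal to \cite[Theorem 41.2]{Tr} amounts to) is viable and genuinely different from the paper's: the paper quotes \cite[Theorem 4.4]{AC} for the continuity of $M_f=M(f,\cdot)$ and proves the continuity of $M_g=M(\cdot,g)$ by a closed graph argument through $\cE_\omega(\R^N)$, using that $\cO_{M,\omega}(\R^N)$ is ultrabornological and $\cS_\omega(\R^N)$ is Fr\'echet. However, the step in which you pass from your estimate to the continuity of $M_g$ has a genuine gap. You claim that $f\mapsto fg$ is ``continuous from each Banach step $\cO^{m'}_{n_0,\omega}(\R^N)$ into $\cS_\omega(\R^N)$''. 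This is not true: the inequality
\[
q_{m,n}(fg)\leq e^{mL}\,r_{m',n_0}(f)\,q_{m',n+n_0}(g)
\]
controls only the seminorms $q_{m,n}$ with $mL\leq m'$, whereas membership in (let alone continuous dependence in) $\cS_\omega(\R^N)$ requires control of $q_{\tilde m,n}(fg)$ for \emph{all} $\tilde m$. Since the spaces $\cO^{m}_{n_0,\omega}(\R^N)$ decrease as the upper index grows, an $f$ lying in the single step $\cO^{m'}_{n_0,\omega}(\R^N)$ need not lie in $\cO^{\tilde m'}_{\tilde n_0,\omega}(\R^N)$ for $\tilde m'>m'$, and $fg$ need not belong to $\cS_\omega(\R^N)$ at all unless $f\in\cO_{M,\omega}(\R^N)$ --- that is precisely the multiplier characterization. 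So the ``universal property'' of the (LB)-space $\cO^{m'}_\omega(\R^N)$ does not apply as you state it, and your closing remark that one ``has to pass through the step Banach spaces'' names the difficulty without resolving it.

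The argument can be repaired by working one target seminorm at a time: for fixed $g$, $m$, $n$ and $m'\geq mL$, the quantity $\rho(f):=q_{m,n}(fg)$ defines a seminorm on all of $\cO^{m'}_\omega(\R^N)=\ind_{\stackrel{\rightarrow}{n_0}}\cO^{m'}_{n_0,\omega}(\R^N)$ which, by your estimate, is finite and continuous on every step; hence $\rho$ is a continuous seminorm on the (LB)-space $\cO^{m'}_\omega(\R^N)$, and therefore on $\cO_{M,\omega}(\R^N)$ via the continuous inclusion $\cO_{M,\omega}(\R^N)\hookrightarrow\cO^{m'}_\omega(\R^N)$ furnished by the projective description. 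Letting $(m,n)$ range over $\N^2$ gives the continuity of $M_g$. With this correction (or by substituting the paper's closed graph argument), the remainder of your proof --- in particular the Banach--Steinhaus upgrade to hypocontinuity, which correctly uses that both spaces are barrelled --- is sound.
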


\begin{proof} By \cite[Theorem 4.4]{AC}  the operator $M_f:=M(f, \cdot) \colon \cS_\omega(\R^N)\to \cS_\omega(\R^N)$, is continuous for all $f\in \cO_{M,\omega}(\R^N)$.

	Let $g\in \cS_\omega(\R^N)$ be fixed. We claim that  the operator $M_g:=M( \cdot,g) \colon \cO_{M,\omega}(\R^N)\to \cS_\omega(\R^N)$, is continuous. To show the claim, we suppose that $\{f_i\}_i\subset  \cO_{M,\omega}(\R^N)$ is  any net such that $f_i\to f$ in $\cO_{M,\omega}(\R^N)$ and $M_g(f_i)=f_ig\to h$ in $\cS_\omega(\R^N)$. Then by  \cite[Theorem 5.2(1) and Proposition 5.6]{AC} we have that $f_i\to f$ in $\cE_{\omega}(\R^N)$ and hence, $M_g(f_i)=f_ig\to fg$ in $\cE_{\omega}(\R^N)$ too. But
	$M_g(f_i)=f_ig\to h$ also in $\cE_\omega(\R^N)$. Therefore, $h=fg=M_g(f)$. Since $\{f_i\}_i\subset  \cO_{M,\omega}(\R^N)$ is arbitrary, this shows that the graph of the operator $M_g$ is closed. But, the space $\cO_{M,\omega}(\R^N)$ is ultrabornological by \cite{De}, hence barrelled, and $\cS_\omega(\R^N)$ is a Fr\'echet space, and so $M_g$ is necessarily continuous.
	
	Finally, as the operator $M$ is separately continuous and $\cO_{M,\omega}(\R^N)$ and $\cS_\omega(\R^N)$ are barrelled spaces, applying \cite[Theorem 41.2]{Tr}  we can conclude that $M$ is a hypocontinuous bilinear mapping. 
\end{proof}

\begin{cor}\label{C.Mult1}  Let $\omega$ be a non-quasianalytic weight function. Then the multiplication operator $M\colon \cO_{C,\omega}(\R^N)\times \cS_\omega(\R^N)\to \cS_\omega(\R^N)$, $(f,g)\mapsto fg$, is separately continuous and hence, a hypocontinuous bilinear mapping.
	\end{cor}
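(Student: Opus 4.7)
The plan is to deduce this corollary directly from Proposition \ref{P.Mult1} by exploiting the continuous inclusion $\cO_{C,\omega}(\R^N)\hookrightarrow \cO_{M,\omega}(\R^N)$ (recalled in the excerpt right after Definition \ref{D.spaziO}). The key observation is that separate continuity of the restricted bilinear map is inherited from that of the ambient one, and hypocontinuity will then follow from the barrelledness of both factor spaces together with Treves' theorem, just as in the proof of Proposition \ref{P.Mult1}.

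In detail, I would first check that $M$ is well-defined on $\cO_{C,\omega}(\R^N)\times \cS_\omega(\R^N)$ with values in $\cS_\omega(\R^N)$; this is immediate since $\cO_{C,\omega}(\R^N)\su \cO_{M,\omega}(\R^N)$ and Proposition \ref{P.Mult1} gives $fg\in\cS_\omega(\R^N)$ for every $f\in\cO_{M,\omega}(\R^N)$ and $g\in\cS_\omega(\R^N)$. For fixed $f\in\cO_{C,\omega}(\R^N)$, the partial map $M_f\colon \cS_\omega(\R^N)\to \cS_\omega(\R^N)$ coincides with the one considered in Proposition \ref{P.Mult1} (with $f$ viewed as an element of $\cO_{M,\omega}(\R^N)$) and is therefore continuous. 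For fixed $g\in\cS_\omega(\R^N)$, the partial map $M_g\colon \cO_{C,\omega}(\R^N)\to \cS_\omega(\R^N)$ factors as the composition of the continuous inclusion $\cO_{C,\omega}(\R^N)\hookrightarrow \cO_{M,\omega}(\R^N)$ with the continuous map $M_g\colon \cO_{M,\omega}(\R^N)\to \cS_\omega(\R^N)$ provided by Proposition \ref{P.Mult1}, and so is continuous.

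Finally, to pass from separate continuity to hypocontinuity, I would invoke \cite[Theorem 41.2]{Tr} exactly as in the proof of Proposition \ref{P.Mult1}: the space $\cO_{C,\omega}(\R^N)$ is barrelled as an (LF)-space (being the inductive limit of the Fr\'echet spaces $\cO_{n,\omega}(\R^N)$), and $\cS_\omega(\R^N)$ is barrelled as a Fr\'echet space. Thus every separately continuous bilinear map on the product takes its values in a space on which hypocontinuity is automatic, yielding the claim. I do not expect any genuine obstacle here; the only point requiring care is ensuring that the inclusion $\cO_{C,\omega}(\R^N)\hookrightarrow \cO_{M,\omega}(\R^N)$ is indeed continuous, which is already recorded in the preliminaries.
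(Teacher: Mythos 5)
Your proof is correct and follows essentially the same route as the paper: the continuous inclusion $\cO_{C,\omega}(\R^N)\hookrightarrow \cO_{M,\omega}(\R^N)$ combined with Proposition \ref{P.Mult1} gives separate continuity, and barrelledness of both factors yields hypocontinuity. You merely spell out the factorization of the partial maps and the barrelledness argument in more detail than the paper does.
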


\begin{proof} By  \cite[Theorem 3.8(1)]{AC} the space $\cO_{C,\omega}(\R^N)$ is continuously included in $\cO_{M,\omega}(\R^N)$. So, via Proposition  \ref{P.Mult1} it follows that the multiplication operator $M$ is separately continuous and so hypocontinuous.
	\end{proof}

\begin{prop}\label{P.Mult2} Let $\omega$ be a non-quasianalytic weight function. Then the multiplication operator $M\colon \cO_{M,\omega}(\R^N)\times \cS'_\omega(\R^N)\to \cS'_\omega(\R^N)$, $(f,T)\mapsto fT$, is separately continuous and hence, a hypocontinuous bilinear mapping.
	\end{prop}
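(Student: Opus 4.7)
The plan is to mimic the strategy of Proposition \ref{P.Mult1}, exploiting that $fT$ is defined by transposition via $\langle fT,g\rangle=\langle T,fg\rangle$ for $g\in\cS_\omega(\R^N)$, and then to invoke Trèves' theorem on separately continuous bilinear maps between barrelled spaces.

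For separate continuity in the second variable I do nothing new: the paper already records (right after Definition \ref{D.spaziO} is set up, in the paragraph quoting \cite{AC,De}) that for every $f\in\cO_{M,\omega}(\R^N)$ the transposed multiplication operator $M_f\colon\cS'_\omega(\R^N)\to\cS'_\omega(\R^N)$, $T\mapsto fT$, is continuous. Thus $M(f,\cdot)$ is continuous on $\cS'_\omega(\R^N)$ for every fixed $f$.

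The heart of the proof is separate continuity in the first variable, i.e.\ that for each fixed $T\in\cS'_\omega(\R^N)$ the map $M_T\colon\cO_{M,\omega}(\R^N)\to\cS'_\omega(\R^N)$, $f\mapsto fT$, is continuous for the strong dual topology. Here the closed-graph trick used in Proposition \ref{P.Mult1} is unavailable because the target is not Fréchet, so I would use hypocontinuity instead: by Proposition \ref{P.Mult1} the bilinear map $M\colon\cO_{M,\omega}(\R^N)\times\cS_\omega(\R^N)\to\cS_\omega(\R^N)$ is hypocontinuous, so for every bounded set $B\subset\cS_\omega(\R^N)$ the family $\{M(\cdot,g):g\in B\}$ is equicontinuous. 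Given a continuous seminorm $p_{B}(S):=\sup_{g\in B}|\langle S,g\rangle|$ on $\cS'_\omega(\R^N)$, pick a $0$-neighbourhood $U\subset\cS_\omega(\R^N)$ with $|\langle T,h\rangle|\leq1$ for all $h\in U$; equicontinuity yields a $0$-neighbourhood $V\subset\cO_{M,\omega}(\R^N)$ with $fg\in U$ for every $(f,g)\in V\times B$, and then
\[
p_B(fT)=\sup_{g\in B}|\langle T,fg\rangle|\leq 1\qquad(f\in V),
\]
which gives continuity of $M_T$ at zero, hence everywhere.

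Once both partial operators are continuous I conclude by citing \cite[Theorem 41.2]{Tr}: a separately continuous bilinear map between barrelled spaces is hypocontinuous. The space $\cO_{M,\omega}(\R^N)$ is barrelled (being ultrabornological by \cite{De}), and $\cS'_\omega(\R^N)$ is the strong dual of the reflexive Fréchet (even nuclear) space $\cS_\omega(\R^N)$, hence also barrelled; so the theorem applies and $M$ is hypocontinuous. The only delicate point, which I expect to be the main obstacle, is precisely the step of upgrading the weak type bound $f\mapsto\langle T,fg\rangle$ from pointwise-in-$g$ continuity to continuity uniform on bounded sets $B$, and the argument above shows that the hypocontinuity established in Proposition \ref{P.Mult1} is exactly what is needed to perform this upgrade.
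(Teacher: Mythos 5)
Your proof is correct, and its overall skeleton (separate continuity in each variable, followed by \cite[Theorem 41.2]{Tr} together with barrelledness of $\cO_{M,\omega}(\R^N)$ and of $\cS'_\omega(\R^N)$) coincides with the paper's. The one step where you genuinely diverge is the continuity of $M_T\colon f\mapsto fT$ for fixed $T$. The paper produces the required $0$-neighbourhood $W_0$ in $\cO_{M,\omega}(\R^N)$ with $W_0\cdot B\subseteq U$ directly from the structural fact that $\cO_{M,\omega}(\R^N)$ carries the topology induced by $\cL_b(\cS_\omega(\R^N))$ (equivalently, from the system of norms \eqref{eq.nuovenorme}): the set $\{f\colon M_f(B)\subseteq U\}$ is by definition a $0$-neighbourhood there. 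You instead extract the same neighbourhood from the equicontinuity clause in the hypocontinuity of $M\colon \cO_{M,\omega}(\R^N)\times\cS_\omega(\R^N)\to\cS_\omega(\R^N)$ established in Proposition \ref{P.Mult1}. These are essentially two phrasings of one fact --- equicontinuity of $\{M(\cdot,g)\colon g\in B\}$ is precisely continuity of $f\mapsto M_f$ into $\cL_b(\cS_\omega(\R^N))$ --- but your derivation routes it through the closed-graph/barrelledness argument of Proposition \ref{P.Mult1}, whereas the paper appeals to the identification of $\cO_{M,\omega}(\R^N)$ as a subspace of $\cL_b(\cS_\omega(\R^N))$ taken from \cite{AC,De}. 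Both are legitimate; your version has the small advantage of reusing a result already proved within the paper rather than an external embedding theorem, at the cost of making Proposition \ref{P.Mult2} logically dependent on Proposition \ref{P.Mult1}. The remaining ingredients (continuity of $M_f$ on $\cS'_\omega(\R^N)$ from \cite[Theorem 4.6]{AC}, and the observation that $\cS'_\omega(\R^N)$ is barrelled because $\cS_\omega(\R^N)$ is nuclear, hence distinguished) are handled as in the paper.
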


\begin{proof} By \cite[Theorem 4.6]{AC} the multiplication operator $M_f:=M(f,\cdot) \colon \cS'_\omega(\R^N)\to  \cS'_\omega(\R^N)$ is continuous for all $f\in \cO_{M,\omega}(\R^N)$. 
	
	Let $T\in \cS'_\omega(\R^N)$ be fixed. We claim that the operator $M_T:=M(\cdot,T)\colon \cO_{M,\omega}(\R^N)\to \cS'_\omega(\R^N)$ is continuous. To show the claim, we fix a $0$-neighborhood $V$ in $\cS'_\omega(\R^N)$. We can suppose that $V=\{S\in \cS'_\omega(\R^N)\colon \sup_{g\in B}|\langle S,g\rangle|\leq \varepsilon \}$ for some $\varepsilon >0$ and a bounded subset $B$ of  $\cS'_\omega(\R^N)$. On the other hand, as $T\in \cS'_\omega(\R^N)$,  there exist $m,n\in\N$ and $c>0$ such that $|\langle T,g\rangle|\leq c\,q_{m,n}(g)$ for all $g\in \cS_\omega(\R^N)$. We now set $U:=\{g\in \cS_\omega(\R^N)\colon q_{m,n}(g)\leq c^{-1}\varepsilon\}$ which is a $0$-neighborhood in $\cS_\omega(\R^N)$,  and define the set 
	\[
	W:=\{T\in \cL(\cS_\omega(\R^N))\colon T(B)\su U\}.
	\]
	Then $W$ is a $0$-neighborhood  in $\cL_b(\cS_\omega(\R^N))$. Since  $\cO_{M,\omega}(\R^N)$ is a subspace of $\cL_b(\cS_\omega(\R^N))$, there exists a  $0$-neighborhood $W_0$ in $\cO_{M,\omega}(\R^N)$ for which 
	$fg\in U$ for all $f\in W_0$ and $g\in B$ (i.e., $M_f(B)\su U$ for all $f\in W_0$). Accordingly, $M_T(W_0)\su V$. Indeed, for a fixed $f\in W_0$, $fg\in U$ for all $g\in B$ and hence, $q_{m,n}(fg)\leq c^{-1}\varepsilon$ for all $g\in B$. This yields for all $g\in B$ that
	\[
	|\langle M_T(f),g\rangle|=|\langle fT,g\rangle|=|\langle T,fg\rangle|\leq c\,q_{m,n}(fg)\leq \varepsilon.
	\]
This means that $M_T(f)\in V$. Since $M_T(W_0)\su V$, it is obviuous that $W_0\su M_T^{-1}(V)$. Finally, since $V$ is an arbitrary $0$-neighborhood in $\cS'_\omega(\R^N)$, we can conclude that the operator $M_T$ is continuous.	

We now observe that the space $\cS_\omega(\R^N)$ is nuclear  and hence distinguished, i.e., its strong dual $\cS'_\omega(\R^N)$ is a barrelled lcHs. So, since $M$ is separately continuous and $\cO_{M,\omega}(\R^N)$ and $\cS'_\omega(\R^N)$ are barrelled lcHs, applying \cite[Theorem 41.2]{Tr}  we get that $M$ is a hypocontinuous bilinear mapping.
	\end{proof}

\begin{cor}\label{C.Mult2}
Let $\omega$ be a non-quasianalytic weight function satisfying the condition $\log(1+t)=o(\omega(t))$ as $t\to\infty$. Then the multiplication operator $M\colon \cO_{M,\omega}(\R^N)\times \cO'_{C,\omega}(\R^N)\to \cS'_\omega(\R^N)$, $(f,T)\mapsto fT$, is separately continuous and hence, a hypocontinuous bilinear mapping.	
\end{cor}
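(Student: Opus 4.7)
The plan is to deduce this corollary from Proposition \ref{P.Mult2} via the continuous inclusion $\cO'_{C,\omega}(\R^N) \hookrightarrow \cS'_\omega(\R^N)$ recalled in Section~2 (which is where the hypothesis $\log(1+t)=o(\omega(t))$ enters, since it is needed for the convolution--multiplication structure on $\cO'_{C,\omega}(\R^N)$ to be well behaved). So no new estimate is required: everything should follow by composition/restriction.

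I would first establish separate continuity. Fix $f\in\cO_{M,\omega}(\R^N)$. Then $M(f,\cdot)\colon\cO'_{C,\omega}(\R^N)\to\cS'_\omega(\R^N)$ equals the composition of the continuous embedding $\cO'_{C,\omega}(\R^N)\hookrightarrow\cS'_\omega(\R^N)$ with the operator $M_f\colon\cS'_\omega(\R^N)\to\cS'_\omega(\R^N)$, which is continuous by Proposition \ref{P.Mult2}; hence $M(f,\cdot)$ is continuous. Fix now $T\in\cO'_{C,\omega}(\R^N)\subset\cS'_\omega(\R^N)$. Viewing $T$ as an element of $\cS'_\omega(\R^N)$, the map $M(\cdot,T)\colon\cO_{M,\omega}(\R^N)\to\cS'_\omega(\R^N)$ is exactly the operator $M_T$ of Proposition \ref{P.Mult2}, hence continuous. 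This gives separate continuity.

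For hypocontinuity, I would prefer the direct route that simply transports the hypocontinuity already established in Proposition \ref{P.Mult2} through the continuous inclusion. Given a bounded set $A\subset\cO_{M,\omega}(\R^N)$, Proposition \ref{P.Mult2} provides equicontinuity of $\{M_f\}_{f\in A}$ in $\cL(\cS'_\omega(\R^N),\cS'_\omega(\R^N))$; precomposing with the continuous inclusion $\cO'_{C,\omega}(\R^N)\hookrightarrow\cS'_\omega(\R^N)$ yields equicontinuity of $\{M(f,\cdot)\}_{f\in A}$ in $\cL(\cO'_{C,\omega}(\R^N),\cS'_\omega(\R^N))$. Dually, a bounded set $B\subset\cO'_{C,\omega}(\R^N)$ is, by continuity of that inclusion, also bounded in $\cS'_\omega(\R^N)$, and another appeal to the hypocontinuity of $M$ on $\cO_{M,\omega}(\R^N)\times\cS'_\omega(\R^N)$ gives equicontinuity of $\{M(\cdot,T)\}_{T\in B}$ in $\cL(\cO_{M,\omega}(\R^N),\cS'_\omega(\R^N))$.

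An alternative closure of the last paragraph would be to invoke \cite[Theorem 41.2]{Tr} directly, since separate continuity of $M$ into the lcHs $\cS'_\omega(\R^N)$ automatically upgrades to hypocontinuity provided that both factor spaces are barrelled: $\cO_{M,\omega}(\R^N)$ is ultrabornological by \cite{De}, and $\cO'_{C,\omega}(\R^N)$ is barrelled (e.g., because $\cO_{C,\omega}(\R^N)$ is reflexive, arguing as for the Montel property in Theorem \ref{T.Montelspaces}). I do not foresee any real obstacle: the content of the statement is only that multiplication respects the embeddings among the various ultradistribution spaces, and both the separate continuity and the equicontinuity on bounded sets are inherited from the corresponding facts on $\cO_{M,\omega}(\R^N)\times\cS'_\omega(\R^N)$.
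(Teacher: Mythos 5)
Your proposal is correct and follows the same basic reduction as the paper: both proofs obtain separate continuity by composing the continuous inclusion $\cO'_{C,\omega}(\R^N)\hookrightarrow\cS'_\omega(\R^N)$ with the two partial maps already handled in Proposition \ref{P.Mult2}. The only divergence is in how hypocontinuity is concluded. Your preferred route---transporting the equicontinuity statements of Proposition \ref{P.Mult2} directly through the inclusion---is valid and in fact slightly more economical, since it needs no barrelledness hypothesis on $\cO'_{C,\omega}(\R^N)$ at all. The paper instead takes your ``alternative'' route via \cite[Theorem 41.2]{Tr}, and for the required barrelledness of $\cO'_{C,\omega}(\R^N)$ it invokes the fact that this space is topologically isomorphic to $\cO_{M,\omega}(\R^N)$ under the Fourier transform (which is where the hypothesis $\log(1+t)=o(\omega(t))$ is used) and $\cO_{M,\omega}(\R^N)$ is ultrabornological, hence barrelled. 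If you do go the barrelledness route, you should cite that Fourier-transform argument rather than reflexivity of $\cO_{C,\omega}(\R^N)$: Theorem \ref{T.Montelspaces} and Lemma \ref{Lbou} concern the spaces $\cO^m_\omega(\R^N)$ and $\cO_{M,\omega}(\R^N)$, and the paper never establishes reflexivity or the Montel property for the (LF)-space $\cO_{C,\omega}(\R^N)$ itself, so that justification would need additional work.
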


\begin{proof} By  \cite[Theorems 3.8(2) and Theorem 3.9]{AC} the space $\cO'_{C,\omega}(\R^N)$ is continuously included in $\cS'_{\omega}(\R^N)$. So, via Proposition  \ref{P.Mult2} it follows that the multiplication operator $M$ is separately continuous and  so hypocontinuous, being   $\cO'_{C,\omega}(\R^N)$ topologically  isomorphic to $\cO_{M,\omega}(\R^N)$ via the Fourier transfom and hence barrelled.
\end{proof}

\begin{prop}\label{P.Multi3} Let $\omega$ be a non-quasianalytic weight function. Then the multiplication operator $M\colon \cO_{M,\omega}(\R^N)\times \cO'_{M,\omega}(\R^N)\to \cO'_{M,\omega}(\R^N)$, $(f,T)\mapsto fT$, is separately continuous and hence, a hypocontinuous bilinear mapping.
	\end{prop}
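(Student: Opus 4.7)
The plan is to follow the blueprint of Proposition \ref{P.Mult2} line by line, replacing $\cS_\omega(\R^N)$ by $\cO_{M,\omega}(\R^N)$ and $\cS'_\omega(\R^N)$ by $\cO'_{M,\omega}(\R^N)$ throughout. I would first establish separate continuity of $M$, and then pass to hypocontinuity via \cite[Theorem 41.2]{Tr} after verifying that both factor spaces are barrelled.

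For the continuity of $M_f=M(f,\cdot)\colon \cO'_{M,\omega}(\R^N)\to\cO'_{M,\omega}(\R^N)$ with $f\in\cO_{M,\omega}(\R^N)$ fixed, I would observe that by Theorem \ref{mu} the operator $g\mapsto fg$ is continuous from $\cO_{M,\omega}(\R^N)$ into itself, and since the product of an $\omega$-ultradistribution by an $\omega$-ultradifferentiable function is defined by transposition, $M_f$ on $\cO'_{M,\omega}(\R^N)$ is precisely the transpose of this operator. Transposes of continuous linear maps are automatically continuous for the strong dual topologies, so this step is immediate.

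For the continuity of $M_T=M(\cdot,T)\colon \cO_{M,\omega}(\R^N)\to\cO'_{M,\omega}(\R^N)$ with $T\in\cO'_{M,\omega}(\R^N)$ fixed, I would adapt the neighborhood-basis argument from Proposition \ref{P.Mult2}. A typical $0$-neighborhood in $\cO'_{M,\omega}(\R^N)$ has the form $V=\{S:\sup_{g\in B}|\langle S,g\rangle|\leq\varepsilon\}$ for $\varepsilon>0$ and some $B$ bounded in $\cO_{M,\omega}(\R^N)$. Using the fundamental system of continuous seminorms \eqref{eq.nuovenorme} on $\cO_{M,\omega}(\R^N)$, pick $m\in\N$, $h\in\cS_\omega(\R^N)$ and $c>0$ such that $|\langle T,\phi\rangle|\leq c\,q_{m,h}(\phi)$ for every $\phi\in\cO_{M,\omega}(\R^N)$, and set $U:=\{\phi:q_{m,h}(\phi)\leq c^{-1}\varepsilon\}$. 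The joint continuity of multiplication on $\cO_{M,\omega}(\R^N)\times\cO_{M,\omega}(\R^N)$ from Theorem \ref{mu} (equivalently, the explicit bound \eqref{e.mult}), combined with the fact that $B$ is absorbed by every $0$-neighborhood, yields a $0$-neighborhood $W_0$ in $\cO_{M,\omega}(\R^N)$ with $fg\in U$ for all $f\in W_0$ and $g\in B$. Then $|\langle M_T(f),g\rangle|=|\langle T,fg\rangle|\leq c\,q_{m,h}(fg)\leq\varepsilon$ for every $f\in W_0$ and $g\in B$, proving $M_T(W_0)\subset V$.

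To promote separate continuity to hypocontinuity I would invoke \cite[Theorem 41.2]{Tr}, which requires both factor spaces to be barrelled. The first, $\cO_{M,\omega}(\R^N)$, is ultrabornological (hence barrelled) by \cite{De}. For $\cO'_{M,\omega}(\R^N)$, I would note that $\cO_{M,\omega}(\R^N)$ is Montel by Theorem \ref{T.Montelspaces}, hence reflexive, so its strong dual is also reflexive and therefore barrelled. The main delicate point is the step two argument: one must convert joint continuity of multiplication into uniform control on the bounded set $B$, and this hinges on the existence of the seminorms \eqref{eq.nuovenorme} adapted to both the test function $h$ and the derivative order, so that $q_{m,h}(fg)$ can be controlled by a single seminorm of $f$ times a single seminorm of $g$ in the style of \eqref{e.mult}; once this is in place, the proof mechanics reduce verbatim to those of Proposition \ref{P.Mult2}.
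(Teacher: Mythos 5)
Your proposal is correct and follows essentially the same route as the paper: the continuity of $M_f$ via transposition of the multiplication on $\cO_{M,\omega}(\R^N)$ (which is exactly how the paper's Corollary \ref{C.Moltiplicatori} is obtained), the continuity of $M_T$ via a bound $|\langle T,\cdot\rangle|\leq c\,q_{m,k}(\cdot)$ combined with the estimate $q_{m,k}(fg)\leq C\,q_{m',l}(f)q_{m',l}(g)$ from Theorem \ref{mu} and the boundedness of $B$, and the passage to hypocontinuity via \cite[Theorem 41.2]{Tr} with the same barrelledness justifications. The only cosmetic difference is that you phrase the second step in neighborhood language while the paper writes it directly as a seminorm estimate.
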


\begin{proof} By Corollary \ref{C.Moltiplicatori} the multiplication operator $M_f:=M(f,\cdot)\colon \cO'_{M,\omega}(\R^N)\to \cO'_{M,\omega}(\R^N)$ is continuous for all $f\in \cO_{M,\omega}(\R^N)$. 
	
	Let $T\in \cO'_{M,\omega}(\R^N)$ be fixed. We claim that  $M_T:=M(\cdot, T)\colon \cO_{M,\omega}(\R^N)\to \cO'_{M,\omega}(\R^N)$ is a continuous operator. To this end, we observe that there exist a function $k\in \cS_\omega(\R^N)$, $m\in\N$ and $c>0$ such that for all $g\in \cO_{M,\omega}(\R^N)$ we have
	\begin{equation}\label{eq.dis1}
	|\langle T, g\rangle|\leq c\,q_{m,k}(g).
	\end{equation}
If $B$ is any bounded subset of $\cO_{M,\omega}(\R^N)$, then from \eqref{eq.dis1} it follows  for all $f\in \cO_{M,\omega}(\R^N)$ that
	\begin{equation}\label{eq.dis2}
	\sup_{g\in B}|M_T(f)(g)|=\sup_{g\in B}|\langle T, fg\rangle|\leq c\,\sup_{g\in B}q_{m,k}(fg).
	\end{equation}
	Now, as it is shown in Theorem \ref{mu} there exist $l\in \cS_\omega(\R^N)$ and  $m'\in N$ such that $q_{m,k}(uv)\leq C q_{m',l}(u)q_{m',l}(v)$ whenever $u,v\in \cS_\omega(\R^N)$  and for $C=e^{mL}$. Accordingly, we obtain via \eqref{eq.dis2} that
	\[
	\sup_{g\in B}|M_T(f)(g)|\leq Cc\,\sup_{g\in B}q_{m',l}(g)q_{m',l}(f)
	\]
	for all $f\in \cO_{M,\omega}(\R^N)$, where $D:=\,\sup_{g\in B}q_{m',l}(g)<\infty$, being   $B$  a bounded subset of $\cO_{M,\omega}(\R^N)$. Since $B$ is an arbitrary bounded subset of $\cO_{M,\omega}(\R^N)$, this yields that the multiplication operator $M_T\colon \cO_{M,\omega}(\R^N)\to \cO'_{M,\omega}(\R^N)$ is continuous.
	
	Finally, since $M$ is separateley continuous and  $\cO_{M,\omega}(\R^N)$ and $\cO'_{M,\omega}(\R^N)$ are barrelled lcHs (the latter space is barrelled because it is reflexive as the strong dual of a Montel space), applying \cite[Theorem 41.2]{Tr} we get that $M$ is a hypocontinuous bilinear mapping.
		\end{proof}
	
	\begin{prop}\label{P.Multi4} Let $\omega$ be a non-quasianalytic weight function satisfying the condition $\log(1+t)=o(\omega(t))$ as $t\to\infty$. Then the multiplication operator $M\colon \cO_{C,\omega}(\R^N)\times \cO'_{C,\omega}(\R^N)\to \cO'_{C,\omega}(\R^N)$, $(f,T)\mapsto fT$, is separately continuous and hence, a hypocontinuous bilinear mapping.
	\end{prop}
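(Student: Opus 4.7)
The proof parallels Proposition \ref{P.Multi3}, adapted to the (LF)-structure of $\cO_{C,\omega}(\R^N)$. Continuity of $M$ in the second variable is immediate from Corollary \ref{C.Moltiplicatori}, which identifies $M(\cO'_{C,\omega}(\R^N))$ with $\cO_{C,\omega}(\R^N)$ and so yields continuity of $M_f:=M(f,\cdot)$ for each $f\in\cO_{C,\omega}(\R^N)$. The substantive task is to show, for a fixed $T\in\cO'_{C,\omega}(\R^N)$, that $M_T:=M(\cdot,T)\colon \cO_{C,\omega}(\R^N)\to\cO'_{C,\omega}(\R^N)$ is continuous.

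Since $\cO_{C,\omega}(\R^N)=\ind_{n}\cO_{n,\omega}(\R^N)$ is an (LF)-space, it suffices to verify continuity of each restriction $M_T\colon \cO_{n_1,\omega}(\R^N)\to\cO'_{C,\omega}(\R^N)$. A basic $0$-neighborhood in the strong dual has the form $V=\{S\colon \sup_{g\in B}|\langle S,g\rangle|\leq \varepsilon\}$ for some $\varepsilon>0$ and some $B$ bounded in $\cO_{C,\omega}(\R^N)$; by regularity of the (LF)-space such a $B$ lies in, and is bounded in, some step $\cO_{n_2,\omega}(\R^N)$. Since $\cO_{n_1+n_2,\omega}(\R^N)$ is a continuously embedded Fr\'echet step of $\cO_{C,\omega}(\R^N)$, the restriction of $T$ to it is continuous, yielding $m\in\N$ and $c>0$ with $|\langle T,h\rangle|\leq c\,r_{m,n_1+n_2}(h)$ for every $h\in\cO_{n_1+n_2,\omega}(\R^N)$.

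Choosing $m'\geq Lm$ with $L$ as in \eqref{l}, the multiplication estimate \eqref{e.Multip} from the proof of Proposition \ref{P.O_C-M} then bounds $r_{m,n_1+n_2}(fg)$ by $C\,r_{m',n_1}(f)\,r_{m',n_2}(g)$ whenever $f\in\cO_{n_1,\omega}(\R^N)$ and $g\in\cO_{n_2,\omega}(\R^N)$. Combining these inequalities one obtains
\[
\sup_{g\in B}|\langle M_T(f),g\rangle|=\sup_{g\in B}|\langle T,fg\rangle|\leq cC\,r_{m',n_1}(f)\sup_{g\in B}r_{m',n_2}(g),
\]
and the final supremum is finite since $B$ is bounded in the Fr\'echet space $\cO_{n_2,\omega}(\R^N)$. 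This proves the continuity of $M_T$ on each step $\cO_{n_1,\omega}(\R^N)$ and hence on $\cO_{C,\omega}(\R^N)$.

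For hypocontinuity one invokes \cite[Theorem 41.2]{Tr}: $\cO_{C,\omega}(\R^N)$ is barrelled as an (LF)-space, while the hypothesis $\log(1+t)=o(\omega(t))$ makes the Fourier transform a topological isomorphism from $\cO'_{C,\omega}(\R^N)$ onto the ultrabornological (hence barrelled) space $\cO_{M,\omega}(\R^N)$, so that $\cO'_{C,\omega}(\R^N)$ is barrelled too. The main technical obstacle is the tacit appeal to regularity of the (LF)-space $\cO_{C,\omega}(\R^N)$: without it one cannot collapse a general bounded set $B$ to a single step, and the seminorm estimate above would not close uniformly; a fallback would be a closed graph argument, feasible since $\cO'_{C,\omega}(\R^N)\cong\cO_{M,\omega}(\R^N)$ is webbed as a countable projective limit of (LB)-spaces.
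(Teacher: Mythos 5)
Your treatment of $M_f$ (via Corollary \ref{C.Moltiplicatori}) and of the final hypocontinuity step (barrelledness of both factors plus \cite[Theorem 41.2]{Tr}) matches the paper. For the continuity of $M_T$, however, your primary route is genuinely different from the paper's and hinges on a fact the paper does not establish: the regularity of the (LF)-space $\cO_{C,\omega}(\R^N)=\ind_{\stackrel{\rightarrow}{n}}\cO_{n,\omega}(\R^N)$. The paper only proves regularity (indeed completeness and reflexivity) of the (LB)-spaces $\cO^m_\omega(\R^N)=\ind_{\stackrel{\rightarrow}{n}}\cO^m_{n,\omega}(\R^N)$, which are the steps of the projective limit defining $\cO_{M,\omega}(\R^N)$, not the Fr\'echet steps of $\cO_{C,\omega}(\R^N)$. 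From boundedness of $B$ in $\cO_{C,\omega}(\R^N)$ one only gets, for each $m$, an index $n(m)$ with $B$ contained and bounded in $\cO^m_{n(m),\omega}(\R^N)$; to place $B$ in a single step $\cO_{n_2,\omega}(\R^N)=\bigcap_{m}\cO^m_{n_2,\omega}(\R^N)$ you need $n(m)$ uniform in $m$, which is exactly the nontrivial regularity statement you are tacitly assuming. You flag this yourself, and your proposed fallback is precisely the paper's actual proof: the paper shows the graph of $M_T$ is closed --- using that $f_ig\to fg$ in $\cO_{C,\omega}(\R^N)$ by Proposition \ref{P.O_C-M}, hence $T(f_ig)\to T(fg)$ --- and then applies the closed graph theorem, $\cO_{C,\omega}(\R^N)$ being an (LF)-space and $\cO'_{C,\omega}(\R^N)$ a webbed space. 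So either supply a proof of regularity of this (LF)-space, or promote your fallback to the main argument; in the latter case make the closed-graph verification explicit, since that is where Proposition \ref{P.O_C-M} actually enters. If regularity were available, your estimate via \eqref{e.Multip} would be a genuine improvement in explicitness, as it yields concrete seminorm bounds for $M_T$ on each step rather than an abstract continuity conclusion.
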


\begin{proof} By Corollary \ref{C.Moltiplicatori} the multiplication operator $M_f:=M(f,\cdot)\colon \cO'_{C,\omega}(\R^N)\to \cO'_{C,\omega}(\R^N)$ is continuous for all $f\in \cO_{C,\omega}(\R^N)$. 
	
	Let $T\in \cO'_{C,\omega}(\R^N)$ be fixed. We show that  $M_T:=M(\cdot, T)\colon \cO_{C,\omega}(\R^N)\to \cO'_{C,\omega}(\R^N)$ is a continuous operator. To see this, let $\{f_i\}_i\subset \cO_{C,\omega}(\R^N)$ be any net such that $f_i\to f$ in $\cO_{C,\omega}(\R^N)$ and $M_T(f_i)=f_iT\to S$ in $\cO'_{C,\omega }(\R^N)$. Then $(f_iT)(g)=T(f_ig)\to S(g)$ for all $g\in\cO_{C,\omega }(\R^N)$. On the other hand, $f_ig\to fg$ in $\cO_{C,\omega}(\R^N)$ for all $g\in\cO_{C,\omega}(\R^N)$ by Proposition \ref{P.O_C-M}, thereby implying that $T(f_ig)\to T(fg)$ for all $g\in\cO_{C,\omega}(\R^N)$. Therefore, $T(fg)=S(g)$ for all $g\in\cO_{C,\omega}(\R^N)$. This means that $S=fT=M_T(f)$. Since $\{f_i\}_i\subset \cO_{C,\omega}(\R^N)$ is arbitrary, this shows that the graph of the operator $M_T$ is closed. But, the space $\cO_{C,\omega}(\R^N)$ is an (LF)-space and $\cO'_{C,\omega}(\R^N)$ is a webbed space, and so $M_T$ is necessarily continuous (see \cite{J}).

	Finally, since $M$ is separateley continuous and $\cO_{C,\omega}(\R^N)$ and $\cO'_{C,\omega}(\R^N)$ are barrelled lcHs, applying \cite[Theorem 41.2]{Tr} we get that $M$ is a hypocontinuous bilinear mapping.
\end{proof}

\subsection{Discontinuity}
In this last part we show examples of multiplication and convolution mapping on some pairs between the spaces $\cO_{M,\omega}(\R^N)$, $\cO_{C,\omega}(\R^N)$, $\cS_{\omega}(\R^N)$ and their duals that are not continuous.

\begin{prop}\label{disc2}
	Let $\omega$ be a non-quasianalytic weight function. Then the following assertions hold true:
	\begin{itemize}
		\item[(i)] The multiplication operator $M\colon \cO_{M,\omega}(\R^N)\times \cO'_{M,\omega}(\R^N)\to \cO'_{M,\omega}(\R^N)$, $(f,T)\mapsto fT$, is discontinuous;
		\item[(ii)] The multiplication operator $M\colon \cO_{C,\omega}(\R^N)\times \cO'_{C,\omega}(\R^N)\to \cO'_{C,\omega}(\R^N)$, $(f,T)\mapsto fT$, is discontinuous.
	\end{itemize}
\end{prop}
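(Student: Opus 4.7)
\medskip

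\noindent\textbf{Proof plan.} I will argue both assertions by contradiction, using the characterization of joint continuity for a bilinear map recalled at the start of Section 3.

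For (i), suppose $M$ is continuous. Taking the continuous seminorm $p(T):=|T(g_0)|$ on $\cO'_{M,\omega}(\R^N)$ (associated with the singleton bounded set $\{g_0\}\subset\cO_{M,\omega}(\R^N)$), continuity provides a continuous seminorm $q$ on $\cO_{M,\omega}(\R^N)$ and a bounded set $B'\subset\cO_{M,\omega}(\R^N)$ with
\[
|T(fg_0)|\leq q(f)\,\sup_{h\in B'}|T(h)|,\qquad f\in\cO_{M,\omega}(\R^N),\ T\in\cO'_{M,\omega}(\R^N).
\]
Since $\cE'_\omega(\R^N)\hookrightarrow\cO'_{M,\omega}(\R^N)$, I specialize to $T=\delta_x$ for $x\in\R^N$ to obtain the pointwise bound
\[
|f(x)|\,|g_0(x)|\leq q(f)\,C_{B'}\,e^{k_{B'}\omega(x)},\qquad x\in\R^N,\ f\in\cO_{M,\omega}(\R^N),
\]
where $k_{B'}\in\N$ and $C_{B'}>0$ come from the fact that $B'$ is bounded, hence contained in some step $\cO^1_{k_{B'},\omega}(\R^N)$.

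The plan is then to choose $g_0$ and $f$ so as to violate this estimate. First, before invoking the hypothesis, fix $g_0\in\cO_{M,\omega}(\R^N)$ with $|g_0(x)|\geq c\,e^{K_0\omega(x)}$ for large $|x|$ and some $K_0\geq 1$; such a $g_0$ exists as a smooth regularization of $e^{K_0\omega}$, using property $(\delta)$ of $\omega$ together with the Young-conjugate estimates \eqref{secondprop}--\eqref{firstprop} to control all derivatives, and the identification $\cO_{M,\omega}(\R^N)=\cO_{M,\omega,p}(\R^N)$ of Definition~\ref{D.SapziOOP} for additional flexibility. The hypothesis then delivers some $(q,B')$ determined by $g_0$, and in particular a specific $k_{B'}$. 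Now pick any integer $K$ with $K+K_0>k_{B'}$ and construct $f\in\cO_{M,\omega}(\R^N)$ with $|f(x)|\geq c'e^{K\omega(x)}$ (again a smooth approximant of $e^{K\omega}$). Since $q(f)$ is a finite constant, replacing $f$ by $f/q(f)$ I may assume $q(f)\leq 1$ while retaining $|f(x)|\geq c''e^{K\omega(x)}$. Substituting into the displayed inequality gives $c\,c''\,e^{(K+K_0-k_{B'})\omega(x)}\leq C_{B'}$, which is impossible for $|x|\to\infty$ because $\omega(x)\to\infty$ by property $(\gamma)$. This contradiction proves (i).

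For (ii), the same scheme applies to $\cO_{C,\omega}(\R^N)$. If $M$ were continuous, the specialization $T=\delta_x$ would again yield a pointwise estimate of the same form, now with $B'$ bounded in $\cO_{C,\omega}(\R^N)$. By regularity of this $(LF)$-space, $B'$ is contained and bounded in a single step $\cO_{n_{B'},\omega}(\R^N)$, so $\sup_{h\in B'}|h(x)|\leq C_{B'}e^{n_{B'}\omega(x)}$. Choosing $g_0\in\cO_{K_0+1,\omega}(\R^N)\subset\cO_{C,\omega}(\R^N)$ of growth $e^{K_0\omega}$ and, after the hypothesis determines $n_{B'}$, an $f\in\cO_{K+1,\omega}(\R^N)\subset\cO_{C,\omega}(\R^N)$ of growth $e^{K\omega}$ with $K+K_0>n_{B'}$, scaled so that $q(f)\leq 1$, reproduces exactly the same unboundedness of the left-hand side and yields the contradiction.

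The main obstacle is the concrete construction of the test functions $g_0$ and $f$ of prescribed $\omega$-growth belonging to the relevant space, since $\omega$ itself need not be smooth and one cannot take $g_0=e^{K_0\omega}$ literally. The resolution is to mollify $e^{K_0\omega}$ and control all derivatives via property $(\delta)$ of $\omega$ and the Young conjugate inequalities \eqref{secondprop}--\eqref{firstprop}; once such functions are in hand, the key pointwise estimate does the rest.
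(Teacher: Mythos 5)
Your argument is correct, but it is genuinely different from the one in the paper. The paper's proof is a two-line reduction: if $M$ were continuous, then composing with the continuous evaluation $T\mapsto T(\mathbf 1)$ (legitimate since $\mathbf 1\in\cO_{M,\omega}(\R^N)$) would make the canonical bilinear form $\cO_{M,\omega}(\R^N)\times\cO'_{M,\omega}(\R^N)\to\C$ continuous, and by the classical fact in Horvath (p.~359) this forces $\cO_{M,\omega}(\R^N)$ to be normed, which it is not; the same one-sentence argument disposes of (ii). Your proof instead unwinds the definition of joint continuity, tests against Dirac measures $\delta_x\in\cE'_\omega(\R^N)\hookrightarrow\cO'_{M,\omega}(\R^N)$, and defeats the resulting pointwise estimate $|f(x)||g_0(x)|\le q(f)C_{B'}e^{k_{B'}\omega(x)}$ by producing functions of arbitrarily large exponential $\omega$-growth. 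This is sound: the reduction of $\sup_{h\in B'}|h(x)|$ to $C_{B'}e^{k_{B'}\omega(x)}$ is justified because $B'$ is bounded in the regular (LB)-space $\cO^1_\omega(\R^N)$ (note that in case (ii) you do not actually need regularity of the (LF)-space $\cO_{C,\omega}(\R^N)$, which the paper never asserts --- it suffices to push $B'$ into $\cO_{M,\omega}(\R^N)$ and use regularity of $\cO^1_\omega(\R^N)$); and the order of quantifiers is respected, since $K$ is chosen only after $B'$ determines $k_{B'}$. The one piece you leave as a sketch, the existence of $g_0,f\in\cO_{C,\omega}(\R^N)$ with $|g_0(x)|\ge ce^{K_0\omega(x)}$, does work via $g_0=e^{K'\omega}\star\chi$ with $0\le\chi\in\cD_\omega(\R^N)$, $\int\chi=1$: condition \eqref{sub} controls all derivatives through $\|\partial^\alpha\chi\|_1$, and the lower bound comes out as $ce^{(K'/K)\omega(x)}$, so you must enlarge the initial exponent by the factor $K$ from $(\alpha)$ --- a harmless loss, but one you should make explicit. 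The trade-off: the paper's route is shorter and reusable verbatim for the convolution analogues, at the price of citing non-normability and Horvath; your route is self-contained and quantitative, exhibiting concretely which pairs $(f,\delta_x)$ witness the failure of continuity, at the price of an auxiliary construction lemma.
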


\begin{proof} Since the proof is analogous in both the cases,  we show only assertion (i).
	
	Assume by contradiction that $M\colon \cO_{M,\omega}(\R^N)\times \cO'_{M,\omega}(\R^N)\to \cO'_{M,\omega}(\R^N)$, $(f,T)\mapsto fT$, is continuous. Thus, the  canonical bilinear form $\langle \cdot,\cdot\rangle\colon \cO_{M,\omega}(\R^N)\times \cO'_{M,\omega}(\R^N)\to \C$, $(f,T)\mapsto \langle f,T\rangle:=T(f)$ is continuous, being it equals to the composition map $\mathbf{1}\circ M$ of the continuous mapping ${\mathbf{1}}\colon \cO'_{M,\omega}(\R^N)\to\C$, $T\mapsto  T(\mathbf{1})$ (note that the function $\mathbf{1}\in \cO_{M,\omega}(\R^N)$) and the multiplication $M$.   But the canonical bilinear form of a lcHs $E$ is continuous if, and only if, the space $E$ is normed (see \cite[Page 359]{H}), which is not the case for $\cO_{M,\omega}(\R^N)$.
	\end{proof}

\begin{prop}\label{disc1}
	Let $\omega$ be a non-quasianalytic weight function  satisfying the condition $\log(1+t)=o(\omega(t))$ as $t\to \infty$. Then the following properties hold true:
	\begin{itemize}
		\item[(i)] The convolution operator $C\colon \cO_{M,\omega}(\R^N)\times \cO'_{M,\omega}(\R^N)\to \cO_{M,\omega}(\R^N)$, $(f,T)\mapsto T\star f$, is discontinuous.
		\item[(ii)] The convolution operator $C\colon \cO_{C,\omega}(\R^N)\times \cO'_{C,\omega}(\R^N)\to \cO_{C,\omega}(\R^N)$, $(f,T)\mapsto T\star f$, is discontinuous.
	\end{itemize}
\end{prop}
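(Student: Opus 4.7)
The plan is to reduce each statement to a case of Proposition~\ref{disc2} via the Fourier transform, exploiting the fact that, under the present hypothesis, convolution on the spaces in question is conjugated by $\cF$ to pointwise multiplication on the Fourier images.

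First I would recall that, under the hypothesis $\log(1+t)=o(\omega(t))$ as $t\to\infty$, the Fourier transform $\cF$ is a topological isomorphism both from $\cO_{M,\omega}(\R^N)$ onto $\cO'_{C,\omega}(\R^N)$ and, by transposition, from $\cO'_{M,\omega}(\R^N)$ onto $\cO_{C,\omega}(\R^N)$. Moreover, Remark~\ref{OssC} extends the exchange formula $\cF(T\star f)=\cF(T)\,\cF(f)$ to the pairs $\cO'_{M,\omega}(\R^N)\times \cO_{M,\omega}(\R^N)$ and $\cO'_{C,\omega}(\R^N)\times \cO_{C,\omega}(\R^N)$ that I need here.

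For assertion (i), with $\cF(T)\in \cO_{C,\omega}(\R^N)$ and $\cF(f)\in \cO'_{C,\omega}(\R^N)$, the exchange formula gives the identity $\cF\circ C = M\circ (\cF\times \cF)$ of bilinear maps, where $M$ is the multiplication $\cO_{C,\omega}(\R^N)\times \cO'_{C,\omega}(\R^N)\to \cO'_{C,\omega}(\R^N)$ (after switching the order of the two arguments, which does not affect continuity). Since each factor of $\cF$ appearing on either side is a topological isomorphism, continuity of $C$ is equivalent to continuity of $M$. As $M$ is discontinuous by Proposition~\ref{disc2}(ii), so is $C$, proving (i).

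For assertion (ii) I would repeat the same diagram chase, now with $\cF(T)\in \cO_{M,\omega}(\R^N)$ and $\cF(f)\in \cO'_{M,\omega}(\R^N)$; the convolution $C$ is then conjugated by $\cF$ to the multiplication $\cO_{M,\omega}(\R^N)\times \cO'_{M,\omega}(\R^N)\to \cO'_{M,\omega}(\R^N)$, whose discontinuity is Proposition~\ref{disc2}(i). The only non-trivial point to verify is the applicability of the exchange formula on the precise pairs of spaces in question, and this is exactly what Remark~\ref{OssC} provides; beyond that, the argument is just a composition of topological linear isomorphisms, so no real obstacle is anticipated.
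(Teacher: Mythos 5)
Your argument is correct in outline, but it is a genuinely different route from the one the paper takes; in fact it is precisely the alternative that the authors mention in the remark immediately following Proposition~\ref{disc1}. The paper's own proof does not pass through the Fourier transform at all: assuming $C$ continuous, it composes $C$ with the reflection $R\colon f\mapsto\check f$ and the evaluation $\delta\colon f\mapsto f(0)$, both continuous on $\cO_{M,\omega}(\R^N)$, to conclude that the canonical bilinear form $(f,T)\mapsto T(f)$ on $\cO_{M,\omega}(\R^N)\times\cO'_{M,\omega}(\R^N)$ is continuous, which by the classical criterion (continuity of the canonical bilinear form forces the space to be normed) is impossible; this mirrors the proof of Proposition~\ref{disc2} and is self-contained. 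Your reduction buys economy (everything is delegated to Proposition~\ref{disc2}), but it leans on one step you should not dismiss as routine: you need $\cF\colon\cO'_{M,\omega}(\R^N)\to\cO_{C,\omega}(\R^N)$ (and its analogue $\cF\colon\cO_{C,\omega}(\R^N)\to\cO'_{M,\omega}(\R^N)$) to be a \emph{topological} isomorphism, since transferring discontinuity of $M$ back to $C$ requires inverting both factors of $\cF\times\cF$. The paper only states the isomorphism $\cF\colon\cO'_{C,\omega}(\R^N)\to\cO_{M,\omega}(\R^N)$; obtaining the transposed one with the correct (strong dual, resp.\ (LF)) topologies requires identifying $\cO_{C,\omega}(\R^N)$ with the strong dual of $\cO'_{C,\omega}(\R^N)$, i.e.\ the reflexivity of $\cO_{C,\omega}(\R^N)$, which is not established in this paper (only $\cO_{M,\omega}(\R^N)$ and the steps $\cO^m_\omega(\R^N)$ are shown to be complete Montel). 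You should either supply that reflexivity argument (regularity of the (LF)-limit of the reflexive Fr\'echet spaces $\cO_{n,\omega}(\R^N)$) or cite it from the earlier work; otherwise the "composition of topological linear isomorphisms" you invoke is not fully justified. The exchange formula on the relevant pairs is indeed covered by Remark~\ref{OssC}, as you say.
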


\begin{proof} We first observe that the mapping are well defined by  Remark \ref{OssC}.
	
	 Since the proof is analogous in both the cases,  we show only property (i).
	
	 Assume by contradiction that $C\colon \cO_{M,\omega}(\R^N)\times \cO'_{M,\omega}(\R^N)\to \cO_{M,\omega}(\R^N)$, $(f,T)\mapsto T\star f$, is continuous. Therefore, the  canonical bilinear form $\langle \cdot,\cdot\rangle\colon \cO_{M,\omega}(\R^N)\times \cO'_{M,\omega}(\R^N)\to \C$, $(f,T)\mapsto \langle f,T\rangle:= T(f)$, is continuous, being it equal to  the composition map $\delta\circ R\circ C$  of the continuous mappings $\delta\colon \cO_{M,\omega}(\R^N)\to\C$, $f\mapsto f(0)$, $R\colon \cO_{M,\omega}(\R^N)\to \cO_{M,\omega}(\R^N)$, $f\mapsto \check{f}$, and the convolution $C$.  But the canonical bilinear form of a lcHs $E$ is continuous if, and only if, the space $E$ is normed (see \cite[Page 359]{H}), which is not the case for $\cO_{M,\omega}(\R^N)$.
	\end{proof}
	
\begin{rem}
	The proof of Proposition \ref{disc1} can be also achieved applying  Proposition \ref{disc2} and using the properties of the Fourier transform.
\end{rem}

Therefore, from Propositions \ref{P.Multi3}, \ref{P.Multi4} and \ref{disc2},  we get that the multiplication operators $M\colon \cO_{M,\omega}(\R^N)\times \cO'_{M,\omega}(\R^N)\to \cO'_{M,\omega}(\R^N)$ and $M\colon \cO_{C,\omega}(\R^N)\times \cO'_{C,\omega}(\R^N)\to \cO'_{C,\omega}(\R^N)$ are hypocontinuous but not continuous. The same also holds true for the hypocontinuous multiplication operator $M\colon \cO_{M,\omega}(\R^N)\times \cS'_\omega(\R^N)\to \cS'_\omega(\R^N)$, as we show in the following result.
\begin{prop}\label{disc3}
	Let $\omega$ be a non-quasianalytic weight function. Then the multiplication operator $M\colon \cO_{M,\omega}(\R^N)\times \cS'_\omega(\R^N)\to \cS'_\omega(\R^N)$, $(f,T)\mapsto fT$, is discontinuous.
\end{prop}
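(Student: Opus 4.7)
The plan is to argue by contradiction, mimicking the strategy of Propositions~\ref{disc1} and \ref{disc2}: compose the supposedly continuous $M$ with a continuous linear functional on $\cS'_\omega(\R^N)$ so as to recover the canonical bilinear form of a non-normable space, then appeal to the normability criterion of \cite[Page 359]{H}. The immediate obstruction is that $\mathbf{1}\notin\cS_\omega(\R^N)$, so the evaluation $T\mapsto T(\mathbf{1})$ used in Proposition \ref{disc2} is \emph{not} a continuous functional on $\cS'_\omega(\R^N)$; only evaluations at elements $g\in\cS_\omega(\R^N)$ are available.

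To circumvent this, I would exploit the density of $\cS_\omega(\R^N)$ in $\cO_{M,\omega}(\R^N)$ (stated in Section \ref{intro}) to pick a sequence $\psi_k\in\cS_\omega(\R^N)$ with $\psi_k\to\mathbf{1}$ in $\cO_{M,\omega}(\R^N)$. If $M$ were continuous, then for each $k$ the composition
\[
\beta_k(f,T):=\langle M(f,T),\psi_k\rangle=\langle fT,\psi_k\rangle=\langle T,f\psi_k\rangle
\]
would be a jointly continuous bilinear form on $\cO_{M,\omega}(\R^N)\times\cS'_\omega(\R^N)$. Because $\cO'_{M,\omega}(\R^N)\hookrightarrow\cS'_\omega(\R^N)$ continuously, $\beta_k$ restricts to a jointly continuous form on $\cO_{M,\omega}(\R^N)\times\cO'_{M,\omega}(\R^N)$. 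By Theorem \ref{mu} the multiplication in $\cO_{M,\omega}(\R^N)$ is continuous, so $f\psi_k\to f\cdot\mathbf{1}=f$ in $\cO_{M,\omega}(\R^N)$, and therefore $\beta_k(f,T)=T(f\psi_k)\to T(f)$ pointwise for every $(f,T)\in\cO_{M,\omega}(\R^N)\times\cO'_{M,\omega}(\R^N)$.

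The final step is to promote this pointwise limit to joint continuity of the canonical bilinear form $\beta(f,T)=T(f)$, which would contradict Proposition \ref{disc2}. I would invoke a Banach--Steinhaus-type argument on the barrelled spaces $\cO_{M,\omega}(\R^N)$ (ultrabornological by \cite[Theorem 5.3]{De}) and $\cO'_{M,\omega}(\R^N)$ (barrelled as strong dual of the Montel space $\cO_{M,\omega}(\R^N)$, cf. Theorem \ref{T.Montelspaces}), exploiting the particular factorization $\beta_k(f,T)=\langle T,f\psi_k\rangle$ through the continuous multiplication $\cO_{M,\omega}(\R^N)\times\cS_\omega(\R^N)\to\cS_\omega(\R^N)$ of Theorem \ref{mu} to bound the $\beta_k$ uniformly.

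The main obstacle is this last promotion step, since Banach--Steinhaus applied to a sequence of jointly continuous bilinear forms converging pointwise yields only equihypocontinuity of the family, not automatic joint equicontinuity of the limit. If this route cannot be tightened, the alternative is a direct construction in the spirit of the classical Schwartz counterexample for $\cO_M\times\cS'\to\cS'$: produce sequences $f_k\to 0$ in $\cO_{M,\omega}(\R^N)$ and $T_k\to 0$ in $\cS'_\omega(\R^N)$ with $f_kT_k\not\to 0$ in $\cS'_\omega(\R^N)$, typically with $T_k$ built from weighted Dirac masses at points $x_k$ with $|x_k|\to\infty$ and $f_k$ a suitably scaled bump concentrated near $x_k$, exploiting that the multiplier $f_k$ can re-excite a distribution that the pairing with $\cS_\omega(\R^N)$ had forced to decay.
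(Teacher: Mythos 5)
Your main route has a gap that is not merely technical but unbridgeable: a pointwise limit of jointly continuous bilinear forms on a product of barrelled (non-metrizable) spaces is in general only \emph{hypocontinuous}, and in the present situation that is provably all you can get. Indeed, the limit of your forms $\beta_k(f,T)=\langle T,f\psi_k\rangle$ is exactly the canonical bilinear form of $\cO_{M,\omega}(\R^N)$, which by Proposition \ref{disc2} (via \cite[Page 359]{H}) is \emph{not} continuous, although it is hypocontinuous by Proposition \ref{P.Multi3}. So any Banach--Steinhaus-type promotion of the $\beta_k$ to joint continuity of the limit would contradict a theorem of the paper; the implication ``$M$ continuous $\Rightarrow$ canonical form of $\cO_{M,\omega}(\R^N)$ continuous'' cannot be reached this way, because the only functional that turns $fT$ into $T(f)$ is evaluation at $\mathbf{1}$, which (as you correctly observe) is not available on $\cS'_\omega(\R^N)$, and approximating $\mathbf{1}$ from inside $\cS_\omega(\R^N)$ loses joint continuity in the limit. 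Your fallback --- exhibiting sequences $f_k\to0$ in $\cO_{M,\omega}(\R^N)$ and $T_k\to0$ in $\cS'_\omega(\R^N)$ with $f_kT_k\not\to0$ --- would indeed suffice, but you do not carry out the construction, so as written the argument is incomplete.

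The paper resolves the difficulty by moving in the opposite direction: instead of enlarging the target to the canonical form of $\cO_{M,\omega}(\R^N)\times\cO'_{M,\omega}(\R^N)$, it \emph{shrinks the first factor}. Since $\cS_\omega(\R^N)\hookrightarrow\cO_{M,\omega}(\R^N)$ continuously, continuity of $M$ would force the restricted multiplication $\cS_\omega(\R^N)\times\cS'_\omega(\R^N)\to\cS'_\omega(\R^N)$ to be continuous, and arguing as in Proposition \ref{disc2} one then deduces continuity of the canonical bilinear form of $\cS_\omega(\R^N)$ itself; this contradicts \cite[Page 359]{H} because $\cS_\omega(\R^N)$ is a non-normable Fr\'echet space. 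The lesson is that the correct normability criterion to violate is that of $\cS_\omega(\R^N)$, not that of $\cO_{M,\omega}(\R^N)$; once you restrict the first argument, no limiting procedure is needed.
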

\begin{proof}
	Assume by contradiction that $M\colon \cO_{M,\omega}(\R^N)\times \cS'_\omega(\R^N)\to \cS'_\omega(\R^N)$ is continuous. Since $\cS_\omega(\R^N)$ is continuously included in $\cO_{M,\omega}(\R^N)$, it follows that  $M\colon \cS_{\omega}(\R^N)\times \cS'_\omega(\R^N)\to \cS'_\omega(\R^N)$ is also continuous. Arguing as in the proof of Propositon \ref{disc2}, we get that the canonical bilinear form of $\cS_\omega(\R^N)$ is continuous. This is a contradiction.
	\end{proof}

Using the properties of the Fourier transfom, we obtain the following result as a consequence.
\begin{cor}
	Let $\omega$ be a non-quasianalytic weight function satisfying the condition $\log(1+t)=o(\omega(t))$ as $t\to\infty$. Then the convolution operator $C\colon \cO'_{C,\omega}(\R^N)\times \cS'_{\omega}(\R^N)\to \cS'_{\omega}(\R^N)$, $(T,S)\mapsto T\star S$, is discontinuous.
\end{cor}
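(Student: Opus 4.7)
The plan is to transport the discontinuity already established for the multiplication operator in Proposition \ref{disc3} through the Fourier transform, exactly in the spirit of the remark following Proposition \ref{disc1}. The hypothesis $\log(1+t)=o(\omega(t))$ is precisely what guarantees that the Fourier transform behaves as we need on the spaces in question.

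First, I would recall the three ingredients already established in the paper. Under the stated hypothesis on $\omega$, the Fourier transform $\cF$ is a topological isomorphism from $\cO'_{C,\omega}(\R^N)$ onto $\cO_{M,\omega}(\R^N)$ (cited from \cite[Theorem 6.1]{AC2}) and a topological isomorphism from $\cS'_\omega(\R^N)$ onto itself. Moreover, as noted in the discussion preceding Theorem \ref{T.Convolution}, for every $(T,S)\in\cO'_{C,\omega}(\R^N)\times\cS'_\omega(\R^N)$ the convolution $T\star S$ is a well-defined element of $\cS'_\omega(\R^N)$ and the exchange formula
\[
\cF(T\star S)=\cF(T)\,\cF(S)
\]
from \eqref{eq.FT-C} is valid.

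Second, I would argue by contradiction. Assume that $C\colon \cO'_{C,\omega}(\R^N)\times\cS'_\omega(\R^N)\to\cS'_\omega(\R^N)$ is continuous. Consider the bilinear map
\[
\widetilde{M}\colon \cO_{M,\omega}(\R^N)\times\cS'_\omega(\R^N)\to\cS'_\omega(\R^N),\quad (f,U)\mapsto \cF\bigl(C(\cF^{-1}f,\cF^{-1}U)\bigr),
\]
which would then be continuous as a composition of continuous maps (the Fourier transform being a topological isomorphism in both variables and in the target). For $f=\cF(T)$ with $T\in\cO'_{C,\omega}(\R^N)$ and $U=\cF(S)$ with $S\in\cS'_\omega(\R^N)$, the exchange formula gives
\[
\widetilde{M}(f,U)=\cF(T\star S)=\cF(T)\,\cF(S)=fU=M(f,U).
\]
Since every $(f,U)\in\cO_{M,\omega}(\R^N)\times\cS'_\omega(\R^N)$ arises this way, $\widetilde{M}=M$ on all of $\cO_{M,\omega}(\R^N)\times\cS'_\omega(\R^N)$. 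Hence $M$ would be continuous, contradicting Proposition \ref{disc3}.

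The main obstacle is essentially conceptual rather than computational: one must be sure that the identity $\cF(T\star S)=\cF(T)\cF(S)$ is valid on the full product $\cO'_{C,\omega}(\R^N)\times\cS'_\omega(\R^N)$ (not only on subspaces where both factors are better behaved), so that the chain of identifications $\widetilde{M}(f,U)=M(f,U)$ really holds everywhere. This is precisely what the paper has arranged in the discussion just before Theorem \ref{T.Convolution}, so no additional work is required beyond invoking those facts.
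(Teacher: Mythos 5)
Your proposal is correct and follows exactly the route the paper intends: the paper derives this corollary from Proposition \ref{disc3} by conjugating with the Fourier transform, using that $\cF$ is a topological isomorphism from $\cO'_{C,\omega}(\R^N)$ onto $\cO_{M,\omega}(\R^N)$ and from $\cS'_\omega(\R^N)$ onto itself, together with the exchange formula \eqref{eq.FT-C}, which the paper has already established on all of $\cO'_{C,\omega}(\R^N)\times\cS'_\omega(\R^N)$. Your explicit verification that $\widetilde{M}=M$ on the whole product is exactly the point the paper leaves implicit.
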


Now, we prove that $\left(\cO'_{M,\omega}(\R^N), \star\right)$ is not a topological algebra.

\begin{prop}\label{disc5}
	Let $\omega$ be a non-quasianalytic weight function satisfying the condition $\log(1+t)=o(\omega(t))$ as $t\to\infty$. Then the convolution operator $C\colon \cO'_{M,\omega}(\R^N)\times \cO'_{M,\omega}(\R^N)\to \cO'_{M,\omega}(\R^N)$, $(T,S)\mapsto T\star S$, is discontinuous.
\end{prop}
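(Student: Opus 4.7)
The plan is to reduce the statement via the Fourier transform to the discontinuity of multiplication on $\cO_{C,\omega}(\R^N)$, and then to derive a contradiction with Proposition~\ref{disc2}(ii) by a duality (hypocontinuity) argument.

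First, as implicit in Remark~\ref{OssC}, the Fourier transform restricts to a topological isomorphism $\cF\colon \cO'_{M,\omega}(\R^N)\to \cO_{C,\omega}(\R^N)$, and the identity $\cF(T\star S)=\cF(T)\cF(S)$ holds for every $T,S\in \cO'_{M,\omega}(\R^N)$. Hence the bilinear map $C$ is jointly continuous if and only if the multiplication map $M\colon \cO_{C,\omega}(\R^N)\times \cO_{C,\omega}(\R^N)\to \cO_{C,\omega}(\R^N)$, $(f,g)\mapsto fg$, is jointly continuous; the latter is well defined by Proposition~\ref{P.O_C-M}. So the task is reduced to proving that $M$ fails to be jointly continuous.

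Suppose for contradiction that $M$ is jointly continuous. A standard absorption argument then yields, for any bounded set $A\su \cO_{C,\omega}(\R^N)$, a $0$-neighborhood $U$ in $\cO_{C,\omega}(\R^N)$ such that $D:=U\cdot A$ is itself bounded in $\cO_{C,\omega}(\R^N)$ (pick $0$-neighborhoods $U_0,V_0$ with $U_0\cdot V_0\su W$ for a chosen $0$-neighborhood $W$, absorb $A$ by $V_0$, and rescale $U_0$ accordingly; any other $0$-neighborhood absorbs $U\cdot A$ by the same method). Now consider the dual bilinear mapping $\widetilde M\colon \cO_{C,\omega}(\R^N)\times \cO'_{C,\omega}(\R^N)\to \cO'_{C,\omega}(\R^N)$, $(f,T)\mapsto fT$, defined by $\langle \widetilde M(f,T),g\rangle:=\langle T,fg\rangle$ for $g\in \cO_{C,\omega}(\R^N)$. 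Since $A^\circ$ is a basic $0$-neighborhood of the strong dual, the estimate
\[
|\langle \widetilde M(f,T),g\rangle|=|\langle T,fg\rangle|\leq \sup_{h\in D}|\langle T,h\rangle|\leq 1
\]
holds for every $(f,T,g)\in U\times D^\circ\times A$, so $\widetilde M(U\times D^\circ)\su A^\circ$. As $A$ was an arbitrary bounded subset of $\cO_{C,\omega}(\R^N)$, this proves that $\widetilde M$ is jointly continuous, contradicting Proposition~\ref{disc2}(ii).

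The main obstacle is the careful justification of the Fourier reduction: one must verify that $\cF$ indeed restricts to a topological isomorphism between $\cO'_{M,\omega}(\R^N)$ and $\cO_{C,\omega}(\R^N)$, and that the convolution--multiplication rule $\cF(T\star S)=\cF(T)\cF(S)$ is valid on the full product $\cO'_{M,\omega}(\R^N)\times \cO'_{M,\omega}(\R^N)$. Both facts are inherent in the framework established in Section~3, in particular in Remark~\ref{OssC}; once they are invoked, the duality step displayed above makes the argument quick.
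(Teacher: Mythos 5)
Your reduction via the Fourier transform to the discontinuity of multiplication on $\cO_{C,\omega}(\R^N)\times\cO_{C,\omega}(\R^N)$ is in the spirit of the paper (which runs the same equivalence in the opposite direction to deduce Corollary \ref{disc4} from Proposition \ref{disc5}), although you should note that the identification $\cF(\cO'_{M,\omega}(\R^N))=\cO_{C,\omega}(\R^N)$ as a \emph{topological} isomorphism requires the reflexivity of $\cO_{C,\omega}(\R^N)$, which is not established in the text. The real problem, however, is the second half of your argument: the ``standard absorption argument'' is false. Joint continuity of $M$ gives, for a \emph{fixed} $0$-neighborhood $W$, neighborhoods $U_0,V_0$ with $U_0\cdot V_0\su W$, and after absorbing $A$ into $V_0$ you get $U\cdot A\su W$ for that one $W$; repeating the construction for another $W'$ produces a \emph{different} $U'$, and since a $0$-neighborhood in a non-normable space is not absorbed by an arbitrary $0$-neighborhood, you cannot conclude that a single $U$ makes $U\cdot A$ bounded. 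A concrete counterexample sits inside your own setting: $\cO_{C,\omega}(\R^N)$ contains the constant function $\mathbf{1}$ (Remark \ref{ossm}), and with $A=\{\mathbf{1}\}$ one has $U\cdot A\supseteq U$, which is never bounded because the space is not normable. Consequently $D=U\cdot A$ is not bounded, $D^\circ$ is not a strong $0$-neighborhood in $\cO'_{C,\omega}(\R^N)$, and the claimed contradiction with Proposition \ref{disc2}(ii) evaporates. The underlying implication you are trying to prove --- that joint continuity of multiplication on $E\times E$ forces joint continuity of $(f,T)\mapsto fT$ on $E\times E'_b$ --- is false in general (for $E=\cE(\R^N)$ multiplication is continuous while the module action on $\cE'(\R^N)$ is only hypocontinuous), so the step cannot be repaired by a cleverer proof of the same intermediate claim.

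For comparison, the paper argues quite differently: assuming $C$ continuous on $\cO'_{M,\omega}(\R^N)\times\cO'_{M,\omega}(\R^N)$, it uses the continuous dense inclusion $\cO_{M,\omega}(\R^N)\hookrightarrow\cO_{M}(\R^N)$ to transport the hypothesis to the classical space $\cO'_{M}(\R^N)$, upgrades the restricted map to a continuous map $\cO'_{M}(\R^N)\times\cO'_{M}(\R^N)\to\cO'_{M}(\R^N)$ by the closed graph theorem for (LF)-spaces, and then contradicts Larcher's result that convolution on $\cO'_{M}(\R^N)$ is not continuous. Some external input of this kind (or a direct construction of unbounded behaviour) appears unavoidable; the purely formal duality argument does not suffice.
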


\begin{proof} We first observe that by Remark \ref{OssC} the convolution $C$ is well-defined. Now,
	assume by contradiction that  $C\colon \cO'_{M,\omega}(\R^N)\times \cO'_{M,\omega}(\R^N)\to \cO'_{M,\omega}(\R^N)$ is continuous. Since $\cO_{M,\omega}(\R^N)$ is continuously included in the space $\cO_M(\R^N)$ of multipliers of $\cS(\R^N)$ with dense range, $\cO'_M(\R^N)$ is continuously included in $\cO'_{M,\omega}(\R^N)$. Therefore, we get that the map $C\colon \cO'_{M}(\R^N)\times \cO'_{M}(\R^N)\to \cO'_{M,\omega}(\R^N)$ is continuous with range a subset of  $\cO'_{M}(\R^N)$. This yields that the map $\cO'_{M}(\R^N)\times \cO'_{M}(\R^N)\to \cO'_{M}(\R^N)$ has closed graph, as it is easy to verify. By applying  the closed graph theorem for (LF)-spaces (see, f.i., \cite[Chap. 5, 5.4.1]{J}) we obtain that $\cO'_{M}(\R^N)\times \cO'_{M}(\R^N)\to \cO'_{M}(\R^N)$ is continuous, after having observed that $\cO'_{M}(\R^N)$ in an (LF)-space. This is a contradiction with \cite[Proposition 4]{La}.
\end{proof}

Thanks to Proposition \ref{disc5} and the properties of the Fourier transform we get that $(\cO_{C,\omega}(\R^N),\cdot)$ is not a topological algebra.

\begin{cor}\label{disc4}
	Let $\omega$ be a non-quasianalytic weight function satisfying the condition $\log(1+t)=o(\omega(t))$ as $t\to\infty$. Then the multiplication operator $M\colon \cO_{C,\omega}(\R^N)\times \cO_{C,\omega}(\R^N)\to \cO_{C,\omega}(\R^N)$, $(f,g)\mapsto fg$, is discontinuous.
\end{cor}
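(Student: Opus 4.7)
The plan is to argue by contradiction, transferring the continuity question from the multiplicative side to the convolutive side via the Fourier transform, and then invoking Proposition~\ref{disc5}. Assume, for the sake of contradiction, that the multiplication $M\colon \cO_{C,\omega}(\R^N)\times \cO_{C,\omega}(\R^N)\to \cO_{C,\omega}(\R^N)$ is continuous.

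The key tool is the Fourier transform. From \cite[Theorem 6.1]{AC2} one has the topological isomorphism $\cF\colon \cO_{M,\omega}(\R^N)\to \cO'_{C,\omega}(\R^N)$; transposing this (and using the reflexivity already collected in the paper), one obtains that $\cF\colon \cO'_{M,\omega}(\R^N)\to \cO_{C,\omega}(\R^N)$ is a topological isomorphism as well. The formula $\cF(T\star S)=\cF(T)\cF(S)$ extends to all pairs $(T,S)\in \cO'_{M,\omega}(\R^N)\times \cO'_{M,\omega}(\R^N)$ by Remark~\ref{OssC}, since these spaces embed continuously into $\cO'_{C,\omega}(\R^N)\times \cS'_\omega(\R^N)$.

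With this in hand, the convolution operator on $\cO'_{M,\omega}(\R^N)\times \cO'_{M,\omega}(\R^N)$ factors as
\[
(T,S)\longmapsto \bigl(\cF(T),\cF(S)\bigr)\longmapsto \cF(T)\cdot \cF(S) \longmapsto \cF^{-1}\bigl(\cF(T)\cdot \cF(S)\bigr)=T\star S,
\]
where the first and last maps are topological isomorphisms (more precisely, compositions of the topological isomorphisms $\cF,\cF^{-1}$ with themselves) and the middle one is the multiplication $M$ on $\cO_{C,\omega}(\R^N)\times \cO_{C,\omega}(\R^N)$. If $M$ is continuous, then the whole composition, which is exactly the convolution operator $C\colon \cO'_{M,\omega}(\R^N)\times \cO'_{M,\omega}(\R^N)\to \cO'_{M,\omega}(\R^N)$, is continuous. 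This contradicts Proposition~\ref{disc5}, and the thesis follows.

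The proof is essentially a diagram chase, and the only nontrivial verification is that the Fourier transform really is a topological isomorphism between $\cO_{C,\omega}(\R^N)$ and $\cO'_{M,\omega}(\R^N)$ with the convolution/product exchange formula on the relevant range. This has already been addressed in the paper (via \cite[Theorem 6.1]{AC2} together with Remark~\ref{OssC}), so no genuine obstacle remains; the argument is a direct corollary of Proposition~\ref{disc5} via Fourier duality, paralleling the remark that followed Proposition~\ref{disc1}.
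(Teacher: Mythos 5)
Your argument is correct and is essentially the paper's own: the paper deduces this corollary directly from Proposition~\ref{disc5} ``and the properties of the Fourier transform,'' which is precisely the factorization $C=\cF^{-1}\circ M\circ(\cF\times\cF)$ that you spell out, using that $\cF\colon\cO'_{M,\omega}(\R^N)\to\cO_{C,\omega}(\R^N)$ is a topological isomorphism and that the exchange formula holds on $\cO'_{M,\omega}(\R^N)\times\cO'_{M,\omega}(\R^N)$ by Remark~\ref{OssC}. Your write-up merely makes explicit the diagram chase the paper leaves implicit, so no further comment is needed.
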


{\bf Acknowledgement.} The authors would like to thank Andreas Debrouwere for finding a gap in the earlier version of Theorem \ref{T.complete}.


\begin{thebibliography}{1}
\bibitem{AC} A.A. Albanese, C. Mele, \textit{Multipliers on $\cS_{\omega}(\R^N)$}, J. Pseudo-Differ. Oper. Appl.  \textbf{12} (2021), Article 35.
\bibitem{AC2} A.A. Albanese, C. Mele, \textit{Convolutors on $\cS_{\omega}(\R^N)$}, RACSAM  \textbf{115} (2021), Article 157.
\bibitem{B} G. Bj\"orck, \textit{Linear partial differential operators and generalized	distributions}, Ark.
Mat. \textbf{6} (1965), 351-407. 	
\bibitem{BJO} C. Boiti, D. Jornet, A. Oliaro, \textit{Real Paley-Wiener theorems in spaces of ultradifferentiable functions.} J. Funct. Anal. \textbf{278} (2020), 1-45.
\bibitem{BJOR} C. Boiti, D. Jornet, A. Oliaro, \textit{Regularity of partial differential operators in ultradifferentiable spaces and Wigner type transforms.} J. Math. Anal. Appl. \textbf{446} (2017), 920-944.
\bibitem{BJOS} C. Boiti, D. Jornet,  A. Oliaro, G. Schindl, \textit{Nuclearity of rapidly decreasing ultradifferentiable functions and time-frequency analysis}, Collect. Math. (2020), https://doi.org/10.1007/s13348-020-00296-0.
\bibitem{BMT} R.W. Braun, R. Meise, B.A. Taylor, \textit{Ultradifferentiable functions and Fourier analysis}, Result. Math. \textbf{17} (1990), 206--237.
\bibitem{De} A. Debrouwere, L. Neyt, \textit{Weighted (PLB)-spaces of ultradifferentiable functions and multiplier spaces}, Monatsh. Math. (to appear), arXiv: 2010.02606. 
\bibitem{De1} A. Debrouwere, L. Neyt, J. Vindas, \textit{Characterization of nuclearity for Beurling-Bj\"ork spaces}, Proc. Amer. Math. Soc. \textbf{148}  (2020), 5171-5180.
\bibitem{De2} A. Debrouwere, J. Vindas, \textit{On weighted inductive limits of spaces of ultradifferentiable functions and their duals}, Math. Nachr. \textbf{292}  (2019), 573-602.
\bibitem{De3} A. Debrouwere, J. Vindas, \textit{Topological properties of convolutor spaces via the short-time Fourier transform}, Trans. Amer. Math. Soc. \textbf{374} (2021), 829-861.
\bibitem{DD} P. Dimovski, S. Pilipov\'ic, B. Prangoski, J. Vindas, \textit{Convolution of ultradistributions and ultradistribution	spaces associated to translation-invariant Banach spaces}. Kyoto J. Math. \textbf{56}  (2016), 401-440.
\bibitem{H} J. Horvath, \textit{Topological Vector Spaces and Distributions. Vol. 1.}  Addison-Wesley Publishing Company, 1966.
\bibitem{J} H. Jarchow, \textit{Locally convex spaces.} B.G. Teubner Stuttgart 1981.
\bibitem{K} H. Komatsu, \textit{Ultradistributions 1. Structure theorems and a characterization}, J. Fac. Sci. Tokyo Sec. IA \textbf{20}  (1973), 25-105.
\bibitem{La} J. Larcher, \textit{Multiplications and convolutions in L.Schwartz spaces of test functions and distributions and their continuity.} Analysis (Int.J.Anal.Appl.) \textbf{33} (2013), 319-332.
\bibitem{S} L. Schwartz, \textit{Th\'eorie des Distributions}, Hermann, Paris 1966.
\bibitem{Tr} F. Treves, \textit{Topological Vector Spaces, Distributions and Kernels.} Academic Press, New York, 1967.
\bibitem{Vo} D. Vogt, \textit{Regularity properties of (LF)-spaces}, in Progress in Functional Analysis, North-Holland Math. Stud. \textbf{170} 1992, pp. 57-84.
\bibitem{We} J. Wengenroth, \textit{Acyclic inductive spectra of Fr\'echet spaces}, Studia Math. \textbf{120} (1996), 247-258.
	
	

	\end{thebibliography}
\end{document}